\newtheorem{thm}{Theorem}[section]
\newtheorem{lem}[thm]{Lemma} 
\newtheorem{cor}[thm]{Corollary} 
\newtheorem{Def}[thm]{Definition}
\newenvironment{dfn}{\begin{Def} \rm}{\end{Def}}
\begin{document}

\title[Terwilliger algebras of  Bol loops]{Terwilliger algebras constructed from Cayley tables of finite  Bol loops}

\author{Brian Curtin}
\address{
Department of Mathematics and Statistics,
University of South Florida,
Tampa {FL} {33620} {USA}}
\email{bcurtin@usf.edu
}

\begin{abstract}  
 We describe the Terwilliger algebras of the four-class Latin-square association schemes arising from Cayley tables of Bol loops.  
 We give some necessary conditions involving Terwilliger algebras for a quasigroup to be a Bol loop.
\end{abstract}


\maketitle

\section{Introduction}

In this paper, we examine Terwilliger algebras constructed from finite Bol loops.    Bol loops are quasigroups, and the Cayley table of a finite quasigroup is a Latin square \cite{Pflugfleder:QGLI,Smith:IQGR,SmithRomanowsak:PMA}.  From a Latin square, one may construct a four-class association scheme.  The Bose-Mesner algebra of an association scheme encodes the structure constants of the association scheme (Section \ref{sec:T-latinI}).  Terwilliger algebras supplement the Bose-Mesner algebra with additional combinatorial data by fixing a base point and encoding data concerning subconstituents of the association scheme relative to the base point.   Any small insight into  not-necessarily associative finite quasigroups and Bol loops  is secondary to our goal of understanding the Terwilliger algebras constructed from Bol loops.

We build upon the description of Terwilliger algebras of Latin squares given in \cite{CurtinDaqqa:SALS,CurtinDaqqa:SSRGLS} (Section \ref{sec:T-latinII}) to determine those of Cayley tables Bol loops.   We find that Bol loops have particularly nice Terwilliger algebras, even among quasigroups.  Namely, depending upon the properties of the base point of the Terwilliger algebra has either 3 or 4 isomorphism classes of irreducible modules.  We  also describe some properties of Terwilliger algebras of a Bol loop (Section \ref{sec:Tbol}).   We conclude with a  sufficient condition for a quasigroup to be a Bol loop which partially involves a condition related to Terwilliger algebras  (Section \ref{sec:BolfromT}).

\section{Terwilliger algebras of Latin squares, I}\label{sec:T-latinI}

We recall association schemes and Terwilliger algebras of Latin squares. 

\begin{dfn}       
\label{def:Lparts}
Let $n$ be a positive integer. A {\em Latin square of order $n$}
is an $n\times n$ array $L$ filled with $n$ distinct symbols such that 
every symbol occurs  exactly once in each row and in each column of $L$.
Let $I(L)$ denote the index set for the rows and columns of $L$.   
Let $X(L)=\{(r,c,L(r,c))\,|\, r, c\in I(L)\}$ be the associated orthogonal array.
\end{dfn}       

See \cite{DenesKeedwell:LSA,DenesKeedwell:LSND,LaywineMullen:DMLS} for more on Latin
squares.  The orthogonal array $X(L)$ encodes $L$, and any two components of a point in $X(L)$ uniquely determine the third.

\begin{dfn}       
\label{def:L->AS}
Let $L$ be a Latin square of order $n$, and  let $\mathbb{M}_{X}$ denote the set of complex matrices whose rows and columns are indexed by $X=X(L)$.
Define relation matrices $\{A_i\}_{i=0}^4$ in $\mathbb{M}_{X}$ as follows. For all $x$, $x'\in X$, the respective $(x, x')$-entry of $A_i$ $(i=0,1,2,3,4)$ is 1 if ($i=0$) $x$ and $x'$ agree in all three components, ($i=1$) in only the first component, ($i=2$) in only the second component, ($i=3$) in only the third component, and ($i=4$) in no component; all other entries of these matrices are zero.  
The {\em association scheme of $L$} is the pair $\mathcal{A}_L=(X,  \{A_i\}_{i=0}^4)$.
\end{dfn}       

\begin{dfn}      
Let $L$ be a Latin square, and let $\mathcal{A}_L=(X,  \{A_i\}_{i=0}^4)$ be the association scheme of $L$.  The {\em Bose-Mesner algebra} of $L$ and $\mathcal{A}_L$ is the $\mathbb{C}$-subalgebra of $\mathbb{M}_{X}$ generated by $\{A_i\}_{i=0}^4$.
\end{dfn}       

Association schemes of Latin squares and their Bose-Mesner algebras are discussed in \cite{bailey:ASDEAC,Delsarte:AAASCT}.  General references for association schemes include \cite{bailey:ASDEAC,BannaiIto:ACI,BrouwerCohenNeumair:DRG}.   
Association schemes have structure constants, called {\em  intersection numbers},  $p^{h}_{ij}=|\{z\in X\,|\, A_{i}(x,z)A_{j}(z,y)\}|$ for $x$, $y\in X$ with $A_h(x,y)=1$.  Since  $A_iA_j=\sum_{h}p^{h}_{ij}A_h$, the isomorphism class of a Bose-Mesner algebra depends only upon the $p^{h}_{ij}$.  For  Latin squares, the $p^{h}_{ij}$ depend only on the order of the Latin square.

\begin{dfn}       
\label{def:Talg}
Let $L$ be a Latin square.  Fix $p=(r_{p}, c_{p}, L(r_{p} c_{p}))\in X(L)$ (the {\em base point}).  Define diagonal matrices (the {\em dual idempotents}) $\{E^*_i\}_{i=0}^4$ in $\mathbb{M}_{X}$ as follows:  For all $x\in X(L)$, the $(x,x)$-entry of $E^*_i$ equals the $(p,x)$-entry of $A_i$ $(i=0,1,2,3,4)$.  The {\em Terwilliger} or {\em subconstituent algebra} $\mathcal{A}_L(p)$ of $\mathcal{A}_L$ with respect to $p$ is the subalgebra of $\mathbb{M}_{X}$ generated by $\{A_i\}_{i=0}^4\cup\{E^{*}_i\}_{i=0}^4$.  
\end{dfn}       

Terwilliger algebras are associative  finite-dimensional semisimple complex algebras, so   by Wedderburn theory, each is isomorphic to a direct sum of full complex matrix algebras.   The matrix summands are in bijective correspondence with the isomorphism classes of irreducible $\mathcal{A}_L(p)$-modules, where the size of the summand is the dimension of each module in the class.  Since $\mathcal{A}_L(p)$ acts on  $\mathbb{C}^{n^2}$ by left multiplication,  $\mathbb{C}^{n^2}$ decomposes into the direct sum of irreducible $\mathcal{A}_L(p)$-modules. The {\em multiplicity} of an irreducible $\mathcal{A}_L(p)$-modules is the number of isomorphic copies that appear in any decomposition of $\mathbb{C}^{n^2}$ into irreducible $\mathcal{A}_L(p)$-modules.  See the recent survey \cite{Terwilliger:DRGSAQPP} or the original \cite{Terwilliger:SAI} for more details.   

Terwilliger algebras were first introduced to study P- and Q-polynomial association schemes \cite{Terwilliger:SAI} and their connection to the orthogonal polynomials in the Askey scheme. Since their introduction, the Terwilliger algebras of various families of association schemes have been examined, and several applications have been developed.  

\section{Main class equivalence}
\label{sec:mainclass}

We recall some combinatorial equivalences of Latin squares and the relationship between Terwilliger algebras of equivalent Latin squares.

An {\em isotopy} of a Latin square $L$ is a triple of permutations 
$\Sigma=(\sigma_r, \sigma_c, \sigma_e)$, where the rows, columns, and entries of $L$  are permuted by $\sigma_r$, $\sigma_c$, and $\sigma_e$, respectively. 
Latin squares $L$ and $L'$  are said to be {\em isotopic} if $L'=\Sigma(L)$ for some isotopy $\Sigma$. 

\begin{lem}       
\cite[Thm.~5.4.2]{Daqqa:SALS}
\label{lem:isotopyisom}
Let $L$  and $L'$ be  isotopic Latin squares via isotopy $\Sigma$. Then  $\mathcal{A}_L(p)$ is  isomorphic to $\mathcal{A}_{L'}(\Sigma(p))$ for all $p\in X(L)$.
\end{lem}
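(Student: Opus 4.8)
The plan is to realize the claimed isomorphism concretely as conjugation by a single permutation matrix coming from $\Sigma$. First I would record how an isotopy acts on orthogonal arrays: since $L'=\Sigma(L)$, the triple $\Sigma=(\sigma_r,\sigma_c,\sigma_e)$ induces the map $\phi\colon X(L)\to X(L')$ sending $(r,c,e)\mapsto(\sigma_r(r),\sigma_c(c),\sigma_e(e))$. I would check that $\phi$ is well defined, i.e.\ that the image actually lies in $X(L')$, by verifying $L'(\sigma_r(r),\sigma_c(c))=\sigma_e(L(r,c))$ directly from the definition $L'=\Sigma(L)$. Since each of $\sigma_r$, $\sigma_c$, $\sigma_e$ is a bijection, $\phi$ is a bijection with inverse induced by $\Sigma^{-1}=(\sigma_r^{-1},\sigma_c^{-1},\sigma_e^{-1})$, and by construction $\phi(p)=\Sigma(p)$.

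The key structural observation is that $\phi$ preserves the five relations of Definition \ref{def:L->AS}. Indeed, for any component $k\in\{1,2,3\}$ and any $x,x'\in X(L)$, the points $x$ and $x'$ agree in component $k$ if and only if $\phi(x)$ and $\phi(x')$ agree in component $k$, because the $k$-th coordinate is transformed by the single bijection $\sigma_r$, $\sigma_c$, or $\sigma_e$. Hence the entire agreement pattern (agreement in exactly the first, exactly the second, exactly the third, in all three, or in none of the components) is preserved, so $x$ and $x'$ are in relation $i$ in $\mathcal{A}_L$ if and only if $\phi(x)$ and $\phi(x')$ are in relation $i$ in $\mathcal{A}_{L'}$, for each $i\in\{0,1,2,3,4\}$. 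Equivalently, $A_i^{L}(x,x')=A_i^{L'}(\phi(x),\phi(x'))$ for all $x,x'$. This is really the heart of the matter; the rest is bookkeeping.

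Now let $P$ be the permutation matrix of $\phi$, so that conjugation $M\mapsto PMP^{-1}$ is an algebra isomorphism $\mathbb{M}_{X(L)}\to\mathbb{M}_{X(L')}$ (both are full matrix algebras of equal size, and $\phi$ is a bijection of the index sets). Using that the $(y,y')$-entry of $PMP^{-1}$ equals the $(\phi^{-1}(y),\phi^{-1}(y'))$-entry of $M$, the relation-preservation above gives $P A_i^{L} P^{-1}=A_i^{L'}$ for each $i$. For the dual idempotents, fix the base point $p$ and set $p'=\Sigma(p)=\phi(p)$; comparing diagonal entries, the $(y,y)$-entry of $P E^{*,L}_i P^{-1}$ equals $A_i^{L}(p,\phi^{-1}(y))$, while the $(y,y)$-entry of $E^{*,L'}_i$, the dual idempotent of $\mathcal{A}_{L'}$ at $p'$, equals $A_i^{L'}(p',y)=A_i^{L'}(\phi(p),y)$. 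These agree by relation-preservation applied to $x=p$ and $x'=\phi^{-1}(y)$, so $P E^{*,L}_i P^{-1}=E^{*,L'}_i$.

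Since conjugation by $P$ carries the generating set $\{A_i^{L}\}_{i=0}^4\cup\{E^{*,L}_i\}_{i=0}^4$ of $\mathcal{A}_L(p)$ bijectively onto the generating set $\{A_i^{L'}\}_{i=0}^4\cup\{E^{*,L'}_i\}_{i=0}^4$ of $\mathcal{A}_{L'}(p')$, it restricts to an algebra isomorphism $\mathcal{A}_L(p)\cong\mathcal{A}_{L'}(\Sigma(p))$, as desired. The only genuine obstacle is establishing the well-definedness and relation-preservation of $\phi$; once those are in hand the isomorphism is automatic, and I expect no difficulty beyond careful indexing in the conjugation formulas.
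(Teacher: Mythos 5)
Your proof is correct, and it is the standard argument: the paper itself gives no proof, citing Daqqa's thesis (Thm.~5.4.2), where the isomorphism is likewise realized by conjugation by the permutation matrix of the induced bijection on the orthogonal array. Your verification that $\phi$ preserves the coordinatewise agreement pattern (hence each relation $A_i$ and, taking $x=p$, each dual idempotent $E^*_i$) is exactly the content needed, so nothing is missing.
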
       

Isotopy does not respect quasigroup structure. We may consider only loops
since every Latin square is isotopic to the Cayley table of a loop.   

By a {\em conjugacy} of a Latin square $L$, we mean a permutation $\sigma\in\mathrm{Sym}(3)$  that permutes the components of elements of $X(L)$ according to $\sigma$ (the entry in component $i$ is moved to component $\sigma(i)$).     Latin squares $L$ and $L'$  are said to be {\em conjugate} when
$X(L')=\sigma(X(L))$ for a conjugacy $\sigma$.

\begin{lem}       
\cite[Thm.~5.4.2]{Daqqa:SALS}
\label{lem:conjugacyisom}
Let $L$  and $L'$ be conjugate Latin squares via conjugacy $\sigma$.  Then  $\mathcal{A}_L(p)$ is isomorphic to $\mathcal{A}_{L'}(\sigma(p))$ for all  $p\in X(L)$.
\end{lem}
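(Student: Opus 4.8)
The plan is to realize the conjugacy $\sigma$ as a permutation similarity of full matrix algebras and to verify that it carries the distinguished generators of $\mathcal{A}_L(p)$ onto those of $\mathcal{A}_{L'}(\sigma(p))$, merely permuting their labels. First I would note that $\sigma$ acts as a bijection $X(L)\to X(L')$, since $X(L')=\sigma(X(L))$ by hypothesis and a conjugacy sends a Latin square to a Latin square, so that $X(L')$ is again a legitimate orthogonal array. This bijection induces a permutation matrix $P$ and hence an algebra isomorphism $\Phi\colon\mathbb{M}_{X(L)}\to\mathbb{M}_{X(L')}$, $\Phi(M)=PMP^{-1}$, characterized by $\Phi(M)(\sigma(x),\sigma(x'))=M(x,x')$ for all $x,x'\in X(L)$. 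Being conjugation by an invertible matrix, $\Phi$ is an isomorphism of the ambient full matrix algebras; the whole content of the lemma is to show that its restriction lands correctly on the subalgebras in question.

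Next I would extend $\sigma$ to an action on the index set $\{0,1,2,3,4\}$ by fixing $0$ and $4$ and letting it permute $\{1,2,3\}$ exactly as it permutes the three components. The key computation is then $\Phi(A_i)=A'_{\sigma(i)}$ for each $i$, where the $A'_j$ are the relation matrices of $L'$. This follows directly from the definitions: since the entry in component $i$ is moved to component $\sigma(i)$, two points $x,x'$ agree in precisely the component $i$ if and only if $\sigma(x),\sigma(x')$ agree in precisely the component $\sigma(i)$, while ``agree in all'' and ``agree in none'' are preserved. Comparing entries, $\Phi(A_i)(\sigma(x),\sigma(x'))=A_i(x,x')=A'_{\sigma(i)}(\sigma(x),\sigma(x'))$ for all $x,x'$, and since $\sigma$ is a bijection on $X(L)$ this gives the matrix identity $\Phi(A_i)=A'_{\sigma(i)}$.

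Then I would transport the dual idempotents by specializing the previous identity to the base point. From $A_i(p,x)=A'_{\sigma(i)}(\sigma(p),\sigma(x))$ I obtain, for the diagonal entries, $\Phi(E^*_i)(\sigma(x),\sigma(x))=E^*_i(x,x)=A_i(p,x)=A'_{\sigma(i)}(\sigma(p),\sigma(x))={E'}^{*}_{\sigma(i)}(\sigma(x),\sigma(x))$, where the ${E'}^{*}_{j}$ are the dual idempotents for $L'$ with base point $\sigma(p)$. Hence $\Phi(E^*_i)={E'}^{*}_{\sigma(i)}$. Consequently $\Phi$ sends the generating set $\{A_i\}_{i=0}^{4}\cup\{E^*_i\}_{i=0}^{4}$ of $\mathcal{A}_L(p)$ bijectively onto the generating set $\{A'_i\}_{i=0}^{4}\cup\{{E'}^{*}_i\}_{i=0}^{4}$ of $\mathcal{A}_{L'}(\sigma(p))$, so its restriction is the desired algebra isomorphism.

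I anticipate that the only real care lies in the bookkeeping: fixing the direction in which $\sigma$ acts on components versus on relation indices, and confirming that the base point is transported to $\sigma(p)$ so that the dual idempotents match up. No genuine obstacle arises, since once the generator-matching $\Phi(A_i)=A'_{\sigma(i)}$ and $\Phi(E^*_i)={E'}^{*}_{\sigma(i)}$ is established, the conclusion is immediate from $\Phi$ being an isomorphism of the full matrix algebras. This parallels the proof of Lemma~\ref{lem:isotopyisom}, with the role of the isotopy played here by the component permutation.
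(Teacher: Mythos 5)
Your proof is correct: the identities $\Phi(A_i)=A'_{\sigma(i)}$ and $\Phi(E^{*}_i)={E'}^{*}_{\sigma(i)}$ (with $\sigma$ extended to the relation indices by fixing $0$ and $4$ and permuting $\{1,2,3\}$ as it permutes components) are exactly what is needed, and conjugation by the permutation matrix of $\sigma\colon X(L)\to X(L')$ then restricts to the required isomorphism $\mathcal{A}_L(p)\cong\mathcal{A}_{L'}(\sigma(p))$. The paper itself supplies no proof, citing \cite[Thm.~5.4.2]{Daqqa:SALS}; your relabeling argument is the standard one behind that citation and is the direct analogue of the proof of Lemma~\ref{lem:isotopyisom}.
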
       

Transposing a Latin square corresponds to swapping the first two coordinates in $X(L)$.  Transposition maps Cayley tables of left Bol loops to those of right Bol loops, so we may focus on Terwilliger algebras of right Bol loops.  Conjugacy need not respect quasigroup structure.

Latin squares $L$ and $L'$  are said to be {\em main class equivalent} when $L$ is isotopic to a conjugate of $L'$.  Main class equivalent Latin squares (Cayley tables of quasigroups) have  isomorphic Terwilliger algebras for corresponding base points.

\section{Terwilliger algebras of Latin squares, II}\label{sec:T-latinII}

The structure of a  Terwilliger algebra of a Latin square depends upon the Latin square and the base point  via the following  permutation \cite{CurtinDaqqa:SALS, Daqqa:SALS}.  

\begin{dfn}       
\label{def:permutation}
Let $L$ be a Latin square of order $n$. Pick $p=(r_p, c_p, e_p)\in X(L)$. Define a permutation  $\pi=\pi_{L,p}$  of $I(L) - \{c_p\}$ as follows: For  $c\in I(L) - \{c_p\}$, successively define auxiliary points $x$, $y$, $z\in X$ by  $x=(r_{p},c, L(r_p,c))$,  $y=(r,c_{p}, L(r_p,c))$, and $z=(r, c', e_{p})$.  Define $\pi(c)=c'$.  Note: $r$ and $c'$ are uniquely determined by the known entries in $y$ and $z$, respectively. 
\end{dfn}       
 
\begin{figure}[h]       
\begin{tabular}{c|c}
\begin{tikzpicture}  
\matrix [column sep=.5cm,row sep={.65cm,between origins}]
 {
\node[draw] (11) {p=1,1,1}; & \node (12) {x=1,2,2\phantom{'}};	& \node (13) {x'=1,3,3};\\
\node (21)	{y=2,1,2}; &         & \node (23) {z=2,3,1\phantom{'}}; \\ 
\node(31) {y'=3,1,3}; & \node (32) {z'=3,2,1}; &  
                                                                                                    \\
}; 
\draw [->, thick] (12.west) -- (21.east);
\draw [->, thick] (21.east) -- (23.west);
\draw [->, thick] (23.west) -- (13.west);

\draw [->, dashed] (13.west) -- (31.east);
\draw [->, dashed] (31.east) -- (32.west);
\draw [->, dashed] (32.west) -- (12.west);
\end{tikzpicture} 
&
\begin{tikzpicture} 
\matrix [column sep=1cm,row sep={.65cm,between origins}]
 {
\node[draw] (11) {1}; & \node (12) {2};	& \node (13) {3}; \\
\node (21)	{2}; & \node (22) {3}; & \node (23) {1};  \\ 
\node(31) {3}; & \node (32) {1}; & \node (33) {2};   \\
}; 
\draw [->, thick] (12.west) -- (21.east);
\draw [->, thick] (21.east) -- (23.west);
\draw [->, thick] (23.west) -- (13.west);

\draw [->, dashed] (13.west) -- (31.east);
\draw [->, dashed] (31.east) -- (32.west);
\draw [->, dashed] (32.west) -- (12.west);

\end{tikzpicture}
\\%
 Auxiliary points in $X(L)$ &  Latin square $L$ 
\end{tabular}
\caption{The action of $\pi$ on  $L$ and $X(L)$.}\label{fig:computepi}
\end{figure}
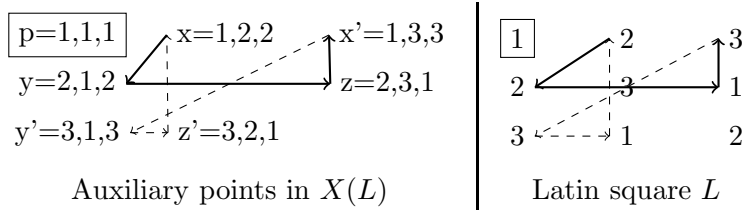       

 Figure \ref{fig:computepi} illustrates  Definition \ref{def:permutation}.  Base point  $p=(1,1,1)$ is marked, and  $\pi$ applied to column $c=2$. Now $x=(1,2,2)$ (column $c$, row of  $p$), $y=(2,1,2)$ (column of $p$, entry of $x$), and $z=(2,3,1)$ (row of $y$, entry of $p$).  Thus $\pi(2)=3$, the column of $z$.  Continuing, $x'=(1,3,3)$ (column of $z$,  row of $p$), $y'=(1,3,3)$, and $z'=(3,2,1)$.  Next $x''=(1,2,2)=x$.  Thus $\pi^2(2)=2$, and we get cycle (23) of $\pi$ consisting of  columns in the order in which they are encountered.   Note $\pi$ can be read from $L$ directly.

We recall the dimensions and multiplicities of the irreducible $\mathcal{A}_L(p)$-modules.  The complete action can be found in \cite{CurtinDaqqa:SALS}; we omit the details.

\begin{thm}        
\label{thm:tmods}
\cite{CurtinDaqqa:SALS}
With the reference to Definition \ref{def:permutation},  assume $n\geq 4$.
Write $\pi_{L,p}={\mathcal C}_1{\mathcal C}_2\cdots {\mathcal C}_k$ as a product of disjoint cycles, including 1-cycles.  For $1\leq i \leq k$, write $u_i=\{\epsilon\in\mathbb{C}\,|\, \epsilon^{|{\mathcal C}_i|}\}$, and let $U=\cup_{i=0}^{k} u_i$. 
The isomorphism classes of irreducible $\mathcal{A}_L(p)$-modules consist of 
(i) one of dimension $5$ and multiplicity 1 (the {\em primary module});
(ii) one of dimension $1$ and multiplicity $n^2-6n+7$;
(iii) one of dimension 6  and multiplicity $|\{ i\,|\, \epsilon\in u_i\}|$ for each $\epsilon\in U\backslash\{1\}$;
(iv) one of dimension 6 and multiplicity $k-1$ if $k>1$;
 and no others.
\end{thm}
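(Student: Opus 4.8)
The plan is to decompose the standard module $V=\mathbb{C}^{X}$ into irreducible $\mathcal{A}_L(p)$-modules by first splitting it along the dual idempotents and then reducing the nontrivial part of the action to the eigenvalue problem for the permutation $\pi=\pi_{L,p}$. I would begin with the subconstituent decomposition $V=\bigoplus_{i=0}^{4}E_i^{*}V$, where $E_i^{*}V$ is spanned by the characteristic vectors of $X_i^{*}=\{x\in X\mid A_i(p,x)=1\}$. A direct count using the Latin-square property gives $\dim E_0^{*}V=1$, $\dim E_i^{*}V=n-1$ for $i=1,2,3$, and $\dim E_4^{*}V=(n-1)(n-2)$. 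Since each $A_i$ is symmetric and each $E_i^{*}$ is a real diagonal idempotent, $\mathcal{A}_L(p)$ is closed under conjugate-transpose and hence semisimple, so I may work with orthogonal module decompositions and recover multiplicities from dimension counts. I would next pin down the primary module $M=\mathcal{A}_L(p)\hat{p}$, where $\hat{p}$ is the characteristic vector of $p$: from $A_i\hat{p}=\sum_{x\in X_i^{*}}\hat{x}=E_i^{*}\mathbf{1}$ one sees that $M$ is spanned by the five subconstituent-sum vectors $E_i^{*}\mathbf{1}$ and is closed under the algebra, yielding the dimension-$5$ irreducible of (i), with multiplicity $1$ and intersecting the subconstituents in dimensions $(1,1,1,1,1)$.

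The crux is the action on $V'=E_1^{*}V\oplus E_2^{*}V\oplus E_3^{*}V$ transverse to $M$. Using the three bijections that identify $X_1^{*}$, $X_2^{*}$, and $X_3^{*}$ with $I(L)\setminus\{c_p\}$ (via the column, row, and entry index, respectively), I would show that the composite of the transition maps $E_i^{*}A_jE_k^{*}$ running once around the triangle $X_1^{*}\to X_2^{*}\to X_3^{*}\to X_1^{*}$ is precisely the permutation matrix of $\pi$; this is exactly the monodromy recorded in Definition \ref{def:permutation}. Decomposing $\mathbb{C}^{I(L)\setminus\{c_p\}}$ into eigenspaces of $\pi$, a cycle $\mathcal{C}_i$ of length $\ell_i$ contributes the $\ell_i$-th roots of unity $u_i$ as eigenvalues, each simple within that cycle. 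For an eigenvalue $\epsilon\neq 1$, each $\pi$-eigenvector assembles with its images in the other two small subconstituents and with the adjacent $A_4$-action into a single $6$-dimensional irreducible module intersecting the subconstituents in dimensions $(0,1,1,1,3)$; counting one such module per cycle with $\epsilon\in u_i$ gives multiplicity $|\{i\mid\epsilon\in u_i\}|$, which is (iii). The eigenvalue $\epsilon=1$ occurs once per cycle, for total multiplicity $k$; one combination (the uniform vector) lies in $M$, and the remaining $k-1$ produce the dimension-$6$ modules of (iv).

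Finally I would account for $E_4^{*}V$. Subtracting the dimension used by $M$ (the one vector $E_4^{*}\mathbf{1}$) and by the $n-2$ six-dimensional modules (three dimensions each, as forced by their profile), the residual vectors supported on $X_4^{*}$ are annihilated by the raising maps and form trivial one-dimensional modules; the leftover dimension is $(n-1)(n-2)-1-3(n-2)=n^{2}-6n+7$, giving (ii). As a global check that nothing is omitted, $\sum(\dim)(\mathrm{mult})=5+(n^{2}-6n+7)+6(n-1-k)+6(k-1)=n^{2}$, using $\sum_i(\ell_i-1)=(n-1)-k$; this matches $\dim V$, confirming the list is exhaustive. (Care with the small-order boundary is needed, since the profile counts above only settle into this generic form once the subconstituents are large enough.)

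I expect the main obstacle to be the crux step: rigorously identifying the triangle composite with the permutation matrix of $\pi$, and then proving that each $\pi$-eigenvector generates a genuinely irreducible $6$-dimensional module of the stated profile rather than something smaller or decomposable. This requires explicitly computing the remaining transition maps $E_i^{*}A_jE_k^{*}$ and the $A_4$-action on these eigenvectors, and invoking the standard thin-module/endomorphism-algebra argument to certify both the dimension and the irreducibility. The bookkeeping that separates the $\epsilon=1$ eigenspace between $M$ and the modules of (iv) also needs care, as does verifying that the modules attached to distinct eigenvalues and distinct cycles are mutually non-isomorphic.
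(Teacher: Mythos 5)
You are working against a statement the paper does not actually prove: Theorem \ref{thm:tmods} is imported verbatim from \cite{CurtinDaqqa:SALS}, and the text explicitly omits the details, so the only meaningful comparison is with that cited source. Measured that way, your outline follows essentially the same route as the original: the subconstituent decomposition with dimensions $1$, $n-1$, $n-1$, $n-1$, $(n-1)(n-2)$ is correct; the primary module spanned by the vectors $E_i^*\mathbf{1}$ gives (i); and your crux identification is sound --- each leg of the triangle $E_2^*A_3E_1^*$, $E_3^*A_1E_2^*$, $E_1^*A_2E_3^*$ is a bijection between subconstituents because any two coordinates of a triple in $X(L)$ determine the third, and the composite is exactly the permutation matrix of $\pi_{L,p}$; this is precisely the mechanism behind the auxiliary points $x$, $y$, $z$ of Definition \ref{def:permutation} (compare Lemma \ref{lem:looppidef}). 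The eigenvalue bookkeeping ($\epsilon=1$ occurring once per cycle, one copy absorbed into the primary module, the remaining $k-1$ giving (iv), and $\epsilon\neq1$ giving (iii)) matches the theorem, as does the residual count $(n-1)(n-2)-1-3(n-2)=n^2-6n+7$ for (ii).

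Two substantive cautions. First, your appeal to a ``standard thin-module/endomorphism-algebra argument'' is misplaced: a module meeting $E_4^*V$ in dimension $3$ is not thin, so thin-module machinery cannot certify irreducibility here; in \cite{CurtinDaqqa:SALS} the action on an explicit six-dimensional spanning set is computed outright, and irreducibility, the profile $(0,1,1,1,3)$, and the fact that the isomorphism type depends only on $\epsilon$ are all read off from that computation. That computation --- together with showing the complement of the constructed modules inside $E_4^*V$ is annihilated by every $E_i^*A_jE_4^*$ with $i\neq4$, which is what (ii) and exhaustiveness actually require --- is the bulk of the proof, and you have deferred it; as written you have a correct skeleton, not a proof. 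Second, your ``global check'' $\sum(\dim)(\mathrm{mult})=n^2$ does not confirm exhaustiveness: it balances formally even at $n=4$, where (ii) would give multiplicity $n^2-6n+7=-1$, which is impossible (indeed $5+6(n-2)=17>16$ there). So the generic structure you describe genuinely requires $n\geq5$, and the boundary hypothesis as transcribed in this paper is too weak; your parenthetical worry about small orders is exactly right and should be promoted from an aside to an explicit hypothesis.
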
       

Theorem \ref{thm:tmods} and the discussion after Definition \ref{def:Talg} give the following result.

\begin{thm}       
\label{thm:permtoisom}
With reference to Theorem \ref{thm:tmods},  let $N= |U|-1$ if $k=1$, and $N=|U|$ if $k>1$.  Then
$\mathcal{A}_L(p)\cong{\mathbb{M}}_{5}\oplus{\mathbb{M}}_{6}^{\oplus N}\oplus{\mathbb{M}}_{1}$, where ${\mathbb{M}}_{\ell}$ is the complex algebra of all complex $(\ell\times \ell)$-matrices.
\end{thm}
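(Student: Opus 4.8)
The plan is to read off the Wedderburn decomposition directly from the list of irreducible modules in Theorem \ref{thm:tmods}, so the entire argument is a bookkeeping exercise. The starting point is the general fact recalled after Definition \ref{def:Talg}: since $\mathcal{A}_L(p)$ is a finite-dimensional semisimple complex algebra, Wedderburn theory gives $\mathcal{A}_L(p)\cong\bigoplus_W\mathbb{M}_{\dim W}$, where $W$ runs over a complete set of representatives of the isomorphism classes of irreducible $\mathcal{A}_L(p)$-modules and $\dim W$ is the common dimension of the modules in that class. The key observation I would emphasize first is that the matrix sizes in this decomposition are governed solely by the dimensions of the irreducible modules; the multiplicities appearing in Theorem \ref{thm:tmods} merely record how often each class occurs in the decomposition of $\mathbb{C}^{n^2}$ and play no role in the Wedderburn structure. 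So I would set the multiplicities aside at the outset.

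Next I would tabulate the isomorphism classes by dimension, using that parts (i)--(iv) of Theorem \ref{thm:tmods} exhaust the classes and are pairwise distinct. Part (i) contributes a single class of dimension $5$, hence one summand $\mathbb{M}_5$; part (ii) contributes a single class of dimension $1$, hence one summand $\mathbb{M}_1$. All remaining classes have dimension $6$, so the only real content is counting them. In part (iii) there is exactly one class for each $\epsilon\in U\backslash\{1\}$, giving $|U\backslash\{1\}|$ classes of dimension $6$. Here I would note that $1\in U$ always, since $u_i$ is the set of $|{\mathcal C}_i|$-th roots of unity and thus contains $1$ for every $i$; consequently $|U\backslash\{1\}|=|U|-1$. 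Part (iv) contributes one further class of dimension $6$ precisely when $k>1$.

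Combining these counts gives the number of $\mathbb{M}_6$ summands. When $k=1$ only part (iii) applies, yielding $|U|-1$ copies of $\mathbb{M}_6$; when $k>1$ parts (iii) and (iv) together yield $(|U|-1)+1=|U|$ copies. These two cases are exactly the definition of $N$, so in both regimes the Wedderburn decomposition is $\mathbb{M}_5\oplus\mathbb{M}_6^{\oplus N}\oplus\mathbb{M}_1$, as claimed.

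As for the main obstacle, there is none of a deep nature: the result is a direct corollary of Theorem \ref{thm:tmods}. The only point requiring care is the two counting regimes governed by whether $k=1$ or $k>1$. One must correctly account for the fact that part (iv) supplies an extra dimension-$6$ class only in the latter case, and must verify that this class is genuinely distinct from those in (iii) so that it yields a separate matrix summand. Since Theorem \ref{thm:tmods} already asserts that (i)--(iv) list distinct isomorphism classes and no others, this distinctness is given, and the bookkeeping closes cleanly.
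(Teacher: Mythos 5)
Your proposal is correct and matches the paper's own justification, which simply invokes Theorem \ref{thm:tmods} together with the Wedderburn discussion following Definition \ref{def:Talg}. Your write-up merely makes explicit the same bookkeeping the paper leaves implicit: one class each of dimensions $5$ and $1$, the $|U|-1$ dimension-$6$ classes from part (iii) (using $1\in U$), and the extra dimension-$6$ class from part (iv) exactly when $k>1$, with multiplicities rightly playing no role in the algebra's isomorphism type.
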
       

\section{Loops}\label{sec:T-loop}

We recall a few facts about quasigroups, loops, and  Bol loops.

A {\em quasigroup} is a set $Q$ with a binary operation (denoted by juxtaposition) such that for all  $a$, $b\in Q$,  there exist unique $c$, $d\in Q$ such that $ac=b$ and $da=b$.

Cayley tables of  finite quasigroups coincide with  Latin squares.
Let $L$ be the Cayley table of a finite quasigroup $Q$. The rows and columns of $L$ are indexed by $I(L)=Q$,  the $(a,b)$-entry of $L$ is  $ab$ $(a,b\in Q)$, and
 $X(L)=\{(a, b, ab)\,|\, a,b\in Q\}$. 
See \cite{Pflugfleder:QGLI,SmithRomanowsak:PMA,Smith:IQGR} for basic properties of quasigroups.
We will use the cancellation property of quasigroups. 
\begin{equation}
\label{eq:uniqueleftdivisor}
\hbox{If $ac=ad$, then $c=d$} \hbox{ for all $a,\, c,\, d\in Q$}.
\end{equation}

 A {\em loop} is a quasigroup $Q$ with an identity element $\iota\in Q$: For all $a\in Q$, $a\iota=a=\iota a$.     
The identity $\iota$ of a loop is unique, and each element $a$ of a loop has a unique left inverse $a^\lambda$ and a unique right inverse $a^\rho$.  
Note that $a$ has a two-sided inverse when $a^\lambda=a^\rho$, in which case we denote it $a^{-1}$.   
See \cite{Pflugfleder:QGLI,SmithRomanowsak:PMA} for further discussion of loops. 

A  {\em right Bol  loop} is a loop $Q$ that satisfies the right Bol axiom
\begin{equation}
\label{eq:Rightbolaxiom}
((ca)b)a = c((ab)a)   \hbox{ for all $a$, $b$, $c\in Q$}.
\end{equation}
A {\em left Bol loop} is a loop $Q$ that satisfies 
$a(b(ac)) = (a(ba))c$  for all $a$, $b$, $c\in Q$. 
A {\em Bol loop} is a loop that is either right or left Bol.
See \cite{Pflugfleder:QGLI,Robinson:BL} for basic properties. 
Every element of a Bol loop has a  two-sided inverse.
A right Bol loop $Q$ has the right inverse property: 
\begin{equation}
\label{eq:BolRIP}
 (ab)b^{-1}= a   \hbox{ for all $a$, $b\in Q$},
 \end{equation}
 and similarly a left Bol loop has the analogous left inverse property.

A {\em Moufang loop} is a loop that  is both  left and  right Bol.  
Every group is a Moufang loop since groups are associative.
A Moufang loop $Q$  has the antiautomorphic inverse property:  
 \begin{equation}\label{eq:moufantiautinv}
 (ab)^{-1}=b^{-1}a^{-1}\hbox{ for all $a$, $b\in Q$}. 
\end{equation}

\section{Terwilliger algebras of Bol loops}\label{sec:Tbol}

We describe the Terwilliger algebras of Bol loops. 
   
\begin{lem}       
\label{lem:looppidef}
Let $L$ be the Cayley table of a finite quasigroup $Q$.    Fix  $p=(r_{p}, c_{p}, r_{p}c_{p})\in X(L)$, and let $\pi=\pi_{L,p}$ be as in  Definition \ref{def:permutation}. 
Then for all $c\in Q-\{c_p\}$, there exists $r\in Q$ such that $\pi(c)$ and $r$ satisfy
\begin{eqnarray}\label{eqn:xiyi}
r c_{p}= r_{p} c, \label{eqn:xiyi-a}\\
r\pi(c)=r_{p}c_{p}.\label{eqn:xiyi-b}
\end{eqnarray}
\end{lem}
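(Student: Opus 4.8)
The plan is to unwind Definition \ref{def:permutation} in the multiplicative notation of $Q$, translating each membership condition in $X(L)$ into a product and reading the two desired equations directly off the auxiliary points $y$ and $z$. Since $p=(r_p,c_p,r_pc_p)$, its entry is $e_p=r_pc_p$, and I will use this identification throughout.

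First I would fix $c\in Q-\{c_p\}$ and form the auxiliary points of Definition \ref{def:permutation}. The point $x=(r_p,c,L(r_p,c))$ has entry $L(r_p,c)=r_pc$. The point $y=(r,c_p,r_pc)$ is then the unique element of $X(L)$ lying in column $c_p$ with entry $r_pc$; equivalently, $r$ is defined by $L(r,c_p)=rc_p=r_pc$. The existence and uniqueness of such an $r$ is exactly the quasigroup solvability axiom (the $da=b$ clause with $a=c_p$, $b=r_pc$), and the resulting identity $rc_p=r_pc$ is precisely \eqref{eqn:xiyi-a}.

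Next I would pass to $z=(r,c',e_p)$, which shares the row $r$ of $y$ and carries the entry $e_p=r_pc_p$; by Definition \ref{def:permutation} we have $\pi(c)=c'$. Membership $z\in X(L)$ gives $L(r,c')=rc'=r_pc_p$, that is $r\pi(c)=r_pc_p$, which is \eqref{eqn:xiyi-b}. Here $c'$ is again uniquely determined, now by the $ac=b$ clause of the quasigroup axiom with $a=r$ and $b=r_pc_p$, matching the uniqueness already noted in Definition \ref{def:permutation}. Taking as witness the single element $r$ produced by $y$, both \eqref{eqn:xiyi-a} and \eqref{eqn:xiyi-b} hold simultaneously, completing the argument.

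I do not expect a substantial obstacle: the lemma is essentially a dictionary entry converting the combinatorial construction of $\pi$ into products in $Q$. The only points deserving a word of care are that the \emph{same} $r$ appears in both equations---being the common row of $y$ and $z$---and that the hypothesis $c\neq c_p$ keeps $c$ in the domain of $\pi$. One may also remark, using the cancellation property \eqref{eq:uniqueleftdivisor}, that $r\neq r_p$: otherwise $r_pc_p=r_pc$ would force $c_p=c$.
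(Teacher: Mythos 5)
Your proposal is correct and follows essentially the same route as the paper's proof: both read off the auxiliary points $x$, $y$, $z$ of Definition \ref{def:permutation} and equate the third entry of $y$ (giving \eqref{eqn:xiyi-a}) and of $z$ (giving \eqref{eqn:xiyi-b}), with the common row $r$ serving as the witness. Your added remarks on uniqueness via the quasigroup axioms and on $r\neq r_p$ are fine but not needed.
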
       

\begin{proof}       
Definition \ref{def:permutation} gives auxiliary points
 $x=(r_{p}, c, r_{p}c)$,   $y=(r, c_{p}, r_{p}c)$, and $z=(r, \pi(c), r \pi(c))$.  The third entry of $y$ is both that of $x$ (namely $r_{p}c$) and $r c_{p}$.  The third entry of $z$ is both that of $p$ (namely $r_{p}c_{p}$) and $r\pi(c)$.
\end{proof}   

\begin{thm}        
\label{thm:bolorder}
Let $L$ be the Cayley table of a  finite right Bol loop $Q$.  Fix $p=(r_{p},c_{p},r_{p}c_{p})\in X(L)$, and let $\pi=\pi_{L,p}$ be as in Definition \ref{def:permutation}.  Then $\pi^2= \mathrm{id}$ and $\pi(c) = (c_p c^{-1})c_p$ for all $c\in Q-\{c_{p}\}$.  Moreover, $\pi(c)=c$ if and only if $cc_{p}^{-1}=c_{p}c^{-1}$.
\end{thm}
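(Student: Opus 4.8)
The plan is to start from the characterization of $\pi(c)$ supplied by Lemma~\ref{lem:looppidef} and to solve equations \eqref{eqn:xiyi-a} and \eqref{eqn:xiyi-b} explicitly, using the right inverse property \eqref{eq:BolRIP} together with the right Bol axiom \eqref{eq:Rightbolaxiom}. First I would observe that \eqref{eqn:xiyi-a}, namely $rc_p=r_pc$, has the unique solution $r=(r_pc)c_p^{-1}$: applying \eqref{eq:BolRIP} gives $((r_pc)c_p^{-1})c_p=r_pc$, and right cancellation in the quasigroup forces uniqueness. It then remains to solve \eqref{eqn:xiyi-b}, $r\pi(c)=r_pc_p$, for $\pi(c)$.

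The heart of the argument is to verify that $\pi(c)=(c_pc^{-1})c_p$ solves \eqref{eqn:xiyi-b} with this $r$; left cancellation then pins down $\pi(c)$ uniquely. Substituting, $r\pi(c)=((r_pc)c_p^{-1})\,((c_pc^{-1})c_p)$, and I would recognize this as an instance of the right-hand side of \eqref{eq:Rightbolaxiom}, with $(r_pc)c_p^{-1}$ playing the role of the leading factor, $c_p$ that of the twice-occurring factor, and $c^{-1}$ that of the middle factor. Rewriting it as the matching left-hand side gives $((((r_pc)c_p^{-1})c_p)c^{-1})c_p$, and two applications of \eqref{eq:BolRIP} --- first $((r_pc)c_p^{-1})c_p=r_pc$, then $(r_pc)c^{-1}=r_p$ --- collapse this to $r_pc_p$, as required.

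With the formula $\pi(c)=(c_pc^{-1})c_p$ in hand, the remaining assertions follow by direct manipulation. For $\pi^2=\mathrm{id}$ I would first establish the inverse formula $\big((c_pc^{-1})c_p\big)^{-1}=(c_p^{-1}c)c_p^{-1}$ by checking that the product of the two expressions reduces to $\iota$ through a single application of \eqref{eq:Rightbolaxiom} followed by repeated use of \eqref{eq:BolRIP}; since every element of a Bol loop has a two-sided inverse, verifying that this product equals $\iota$ suffices. Substituting the inverse formula into $\pi(\pi(c))=(c_p(\pi(c))^{-1})c_p$ and applying \eqref{eq:Rightbolaxiom} once more (using $c_pc_p^{-1}=\iota$ and a final instance of \eqref{eq:BolRIP}) yields $\pi^2(c)=c$; as $\pi$ permutes the finite set $Q-\{c_p\}$, this gives $\pi^2=\mathrm{id}$. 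For the fixed-point criterion I would rewrite $\pi(c)=c$ as $(c_pc^{-1})c_p=c$ and multiply on the right by $c_p^{-1}$, whereupon \eqref{eq:BolRIP} turns the left side into $c_pc^{-1}$ and the right side into $cc_p^{-1}$, giving the stated equivalence.

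The main obstacle is computational rather than conceptual: the work lies in choosing the substitutions into the right Bol axiom \eqref{eq:Rightbolaxiom} so that the subsequent cancellations via \eqref{eq:BolRIP} telescope cleanly, and in identifying the correct inverse of $(c_pc^{-1})c_p$. This is the one point where the special three-factor shape of $\pi(c)$, rather than a generic product, is what makes the Bol axiom applicable.
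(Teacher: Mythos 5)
Your proposal is correct: every substitution into the right Bol axiom and every cancellation via the right inverse property checks out, including the implicit uses of the variant $(ab^{-1})b=a$, which is legitimate since elements of a right Bol loop have two-sided inverses. However, you organize the argument differently from the paper. The paper works in the opposite logical order: from the equations of Lemma~\ref{lem:looppidef} it solves for $r$ with (\ref{eq:BolRIP}) and applies the Bol axiom once to derive the relation $(c_p\,\pi(c)^{-1})c_p=c$ --- a closed form for $\pi^{-1}$, not for $\pi$ --- then shifts indices along the orbit $c_1=c$, $c_2=\pi(c)$, $c_3=\pi^2(c)$ and applies the Bol axiom a second time to obtain $c_3=c_1$, i.e.\ $\pi^2=\mathrm{id}$; only then does the explicit formula $\pi(c)=(c_pc^{-1})c_p$ drop out of the shifted relation. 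You instead exploit the fact that the theorem statement hands you the formula: you verify directly that $(c_pc^{-1})c_p$ satisfies the defining equations (\ref{eqn:xiyi-a}), (\ref{eqn:xiyi-b}) and invoke uniqueness of quasigroup divisors (\ref{eq:uniqueleftdivisor}), then prove involutivity by composing the closed form with itself, which forces you to establish the auxiliary inverse identity $\bigl((c_pc^{-1})c_p\bigr)^{-1}=(c_p^{-1}c)c_p^{-1}$ --- an identity the paper never needs, since its iterated-relation argument sidesteps inverses of compound products entirely. The trade-off: your route is a clean verification that avoids bookkeeping of iterated subscripts and yields the inverse identity as a usable byproduct, but it only works once the answer is known; the paper's derivation discovers the formula and the involutivity simultaneously from the same chain of manipulations. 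Both proofs rest on the same three tools (Lemma~\ref{lem:looppidef}, the Bol axiom (\ref{eq:Rightbolaxiom}), and the right inverse property (\ref{eq:BolRIP})), and the fixed-point equivalence is handled essentially identically in both.
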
       

\begin{proof}       
 We write $c_i=\pi^{i-1}(c)$, and let $r_i$ be such that $r_ic_p= r_pc_i$.  Compute. 
 
 \begin{alignat}{3}
&&&\text{Solve (\ref{eqn:xiyi-b}) for $r_1$ with (\ref{eq:BolRIP}):} & 
   r_1&=(r_{p}c_{p})c_{2}^{-1}.  \label{eq:ri=} \\
&&&\text{Substitute (\ref{eq:ri=}) into (\ref{eqn:xiyi-a}):}  &      
((r_{p}c_{p})c_{2}^{-1})c_{p}   &=   r_{p}c_1.  \nonumber \\
&&&\text{Apply the right Bol axiom (\ref{eq:Rightbolaxiom}):}&
  r_{p}((c_{p}c_{2}^{-1})c_{p}) &=  r_{p}c_1. \nonumber \\
&&&\text{By uniqueness of divisors (\ref{eq:uniqueleftdivisor}):}  &    
  (c_{p}c_{2}^{-1})c_{p} &= c_1.  \label{eq:byb}\\
 &&&\text{Increase subscripts of (\ref{eq:byb}):}&
  (c_{p}c_{3}^{-1})c_{p}& = c_{2}. \label{eq:cpc3cp=c2}\\
&&&\text{Right multiply by $c_2^{-1}$ then $c_p$:}\quad&  
(((c_{p}c_{3}^{-1})c_{p})c_{2}^{-1})c_{p} 
            &= (c_{2}c_{2}^{-1})c_{p}.  \nonumber \\
&&& \text{Simplify the right side:}&         
         & = c_{p}. \nonumber\\
&&&\text{Apply the right Bol axiom (\ref{eq:Rightbolaxiom}):} \quad&        
 (c_{p}c_{3}^{-1})((c_{p}c_{2}^{-1})c_{p}) & =  c_p.   \nonumber \\
%
 &&&\text{Simplify the left side with (\ref{eq:byb}):}&
 (c_{p}c_{3}^{-1})c_1 &=  c_{p}.           \nonumber \\     
&&&\text{Use (\ref{eq:BolRIP}) to rewrite:}&
 c_{p}c_{3}^{-1} &= c_{p} c_{1}^{-1}.  \nonumber\\
 &&&\text{By cancellation (\ref{eq:uniqueleftdivisor}):}&
c_{3}^{-1} &= c_{1}^{-1}.  \nonumber \\
 &&&\text{By uniqueness of inverses:}& \qquad\quad
\pi^2(c_1)=c_{3}&=c_1. \nonumber 
\end{alignat} 
Now  $\pi(c_1) =c_2 = (c_p c_{1}^{-1})c_p$ by  (\ref{eq:cpc3cp=c2}), and  $c_1=(c_p \pi(c_1)^{-1})c_p$ by (\ref{eq:byb}).  Hence $\pi(c_1)=c_2$ if and only if $c_{1}c_{p}^{-1}=c_{p}c_{1}^{-1}$  by (\ref{eq:BolRIP}) and (\ref{eq:byb}).
\end{proof}       

We have the following when considering all the base points. 

\begin{cor}       
\label{cor:rowindep}
With reference to Theorem \ref{thm:bolorder}, the number of elements of $X(L)$ fixed by $\pi$ is independent of the first coordinate (row $r_{p}$) of $p$.
\end{cor}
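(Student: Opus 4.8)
The plan is to show that the fixed-point condition derived in Theorem~\ref{thm:bolorder}, namely $\pi(c)=c \iff cc_p^{-1}=c_pc^{-1}$, actually depends only on the second coordinate $c_p$ of the base point and not on the first coordinate $r_p$. The key observation is that Theorem~\ref{thm:bolorder} gives an explicit description of $\pi=\pi_{L,p}$ as $\pi(c)=(c_pc^{-1})c_p$, and crucially this formula involves $c_p$ but makes no reference whatsoever to $r_p$. So my first step is to record that the entire permutation $\pi_{L,p}$ on $Q-\{c_p\}$ depends only on $c_p$, and in particular its set of fixed points does too.

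Next I would translate this statement about $\pi$ (which acts on column indices) into a statement about fixed points of the action on $X(L)$, since the corollary is phrased in terms of elements of $X(L)$ fixed by $\pi$. Here I need to be careful about what ``elements of $X(L)$ fixed by $\pi$'' means, since $\pi$ as defined acts on $I(L)-\{c_p\}$, not directly on $X(L)$. The natural reading is that $\pi$ induces an action on the subconstituent points (the auxiliary points of Definition~\ref{def:permutation}), and the count of fixed points corresponds to counting columns $c$ with $\pi(c)=c$, possibly adjusted by a fixed additive constant to account for $c_p$ itself and the structure of $X(L)$. The step is to verify that this correspondence between fixed columns and fixed points in $X(L)$ is itself independent of $r_p$, so that the count transfers cleanly.

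The main step is then the counting argument: the number of fixed columns is $\bigl|\{c\in Q-\{c_p\} \mid cc_p^{-1}=c_pc^{-1}\}\bigr|$ by Theorem~\ref{thm:bolorder}, and since the condition $cc_p^{-1}=c_pc^{-1}$ involves only $c$ and $c_p$, this cardinality is manifestly a function of $c_p$ alone. Summing over the appropriate contributions to $X(L)$, the total fixed-point count is a fixed function of $c_p$, hence constant as $r_p$ ranges over $Q$. I would conclude by noting that for each fixed $c_p$, letting $r_p$ vary produces base points $p=(r_p,c_p,r_pc_p)$ all yielding the same fixed-point count.

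The step I expect to be the main obstacle is the second one: pinning down exactly how ``elements of $X(L)$ fixed by $\pi$'' should be interpreted and ensuring the bookkeeping between fixed columns and fixed points of $X(L)$ does not secretly reintroduce a dependence on $r_p$. Once the correspondence is made precise, the heart of the argument is essentially immediate from the $r_p$-free formula $\pi(c)=(c_pc^{-1})c_p$ supplied by Theorem~\ref{thm:bolorder}; the real content is already contained in that theorem, and the corollary is a matter of reading off that the governing quantities never involve $r_p$.
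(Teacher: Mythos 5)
Your proposal is correct and follows the paper's own argument, which consists of the single observation that the fixed-point criterion $cc_p^{-1}=c_pc^{-1}$ from Theorem~\ref{thm:bolorder} makes no reference to $r_p$. Your additional bookkeeping about transferring the count from fixed columns to ``elements of $X(L)$'' is reasonable care about the paper's loose phrasing (since $\pi$ acts on $I(L)-\{c_p\}$), but it adds nothing beyond the paper's one-line proof.
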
       

\begin{proof}       
The condition $cc_{p}^{-1}=c_{p}c^{-1}$ is independent of $r_{p}$.
\end{proof}      

Corollary \ref{cor:rowindep} gives row independence of the cycle structure of  $\pi_{L,p}$; however,  the cycle structure may depend upon the column of $p$.  Figure \ref{fig:rtbolex} contains the Cayley table $L$ of a right Bol loop.  For $p$ in  a given column, the cycle structure of $\pi_{L,p}$ is noted below the column. 

\begin{figure}[h]       
{
\[
\begin{array}{cccccccccc}
1& 2& 3& 4& 5& 6& 7& 8\\
2& 8& 6& 1& 7& 3& 5& 4\\
3& 7& 8& 6& 1& 4& 2& 5\\
4& 1& 7& 8& 6& 5& 3& 2\\
5& 6& 1& 7& 8& 2& 4& 3\\
6& 3& 4& 5& 2& 8& 1& 7\\
7& 5& 2& 3& 4& 1& 8& 6\\
8& 4& 5& 2& 3& 7& 6& 1\\
\hline\smallskip
12^3&1^32^2&1^32^2&1^32^2 & 1^32^2 &1^32 &1^42&12^3 
\end{array}
\]
}
\caption{A right Bol loop, with  cycle structure in each column }
\label{fig:rtbolex}
\end{figure}       

\begin{thm}       
\label{thm:bolisomclasses}
Let  $L$ be  the Cayley table of a finite Bol loop of order at least 4.  Then the Terwilliger algebra of $L$ with respect to any base point is isomorphic (as a complex associative algebra) to ${\mathbb{M}}_{5}\oplus{\mathbb{M}}_{6}\oplus{\mathbb{M}}_{1}$  or ${\mathbb{M}}_{5}\oplus{\mathbb{M}}_{6}\oplus{\mathbb{M}}_{6}\oplus{\mathbb{M}}_{1}$.
\end{thm}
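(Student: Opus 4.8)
The plan is to combine Theorem~\ref{thm:bolorder} with the classification in Theorem~\ref{thm:permtoisom}. First I would reduce to the right Bol case: by the discussion in Section~\ref{sec:mainclass} (conjugacy via Lemma~\ref{lem:conjugacyisom}), transposing the Cayley table carries a left Bol loop to a right Bol loop and induces an isomorphism of Terwilliger algebras at corresponding base points, so it suffices to treat a right Bol loop $Q$ of order $n\geq 4$ and an arbitrary base point $p=(r_p,c_p,r_pc_p)$.

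The crux is that Theorem~\ref{thm:bolorder} gives $\pi_{L,p}^2=\mathrm{id}$, so every cycle $\mathcal{C}_i$ in the disjoint cycle decomposition $\pi_{L,p}=\mathcal{C}_1\cdots\mathcal{C}_k$ has length $1$ or $2$. In the notation of Theorem~\ref{thm:tmods}, a $1$-cycle contributes $u_i=\{1\}$ and a $2$-cycle contributes $u_i=\{1,-1\}$, so the union $U=\bigcup_i u_i$ equals $\{1\}$ when $\pi_{L,p}=\mathrm{id}$ and equals $\{1,-1\}$ when $\pi_{L,p}$ has at least one transposition; in either case $|U|\in\{1,2\}$.

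Next I would eliminate the case split in Theorem~\ref{thm:permtoisom} by checking $k>1$, so that $N=|U|$. Since $\pi_{L,p}$ permutes the $(n-1)$-element set $I(L)-\{c_p\}$ and each cycle covers at most two points, we have $2k\geq n-1\geq 3$, forcing $k\geq 2$. Thus $N=|U|\in\{1,2\}$, and Theorem~\ref{thm:permtoisom} yields $\mathcal{A}_L(p)\cong\mathbb{M}_5\oplus\mathbb{M}_6\oplus\mathbb{M}_1$ when $N=1$ and $\mathcal{A}_L(p)\cong\mathbb{M}_5\oplus\mathbb{M}_6\oplus\mathbb{M}_6\oplus\mathbb{M}_1$ when $N=2$, precisely the two stated algebras.

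The argument is essentially immediate once Theorem~\ref{thm:bolorder} is in hand, so there is no genuine obstacle; the only work is the bookkeeping that the root-of-unity sets $u_i$ collapse to at most $\{1,-1\}$ and that $k>1$ removes the exceptional branch of Theorem~\ref{thm:permtoisom}. It is worth noting separately that both isomorphism types genuinely occur: the right Bol loop of Figure~\ref{fig:rtbolex} has a transposition in $\pi_{L,p}$ at every base point, realizing $N=2$, whereas an elementary abelian $2$-group (a Moufang, hence Bol, loop) of order at least $4$ has $\pi_{L,p}=\mathrm{id}$ at every base point, realizing $N=1$.
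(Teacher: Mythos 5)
Your proof is correct and takes essentially the same route as the paper: Theorem~\ref{thm:bolorder} gives $\pi_{L,p}^2=\mathrm{id}$, so every cycle has length $1$ or $2$, and Theorem~\ref{thm:permtoisom} then yields $N=|U|\in\{1,2\}$ and the two stated algebras. Your explicit check that $k\geq 2$ (ruling out the $k=1$ branch of Theorem~\ref{thm:permtoisom}), the reduction from left to right Bol via Lemma~\ref{lem:conjugacyisom}, and the realizability remarks are details the paper leaves implicit or omits, but the underlying argument is the same.
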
       

\begin{proof}       
Note that $M\geq 1$ since $n\geq 4$. By Theorem \ref{thm:permtoisom}, all cycles of $\pi$ have length at most  two  ($\pi^2=\mathrm{id}$) if and only if $M\leq 2$ (the only if direction requires $n\geq 4$).    By Theorem \ref{thm:bolorder},  $\pi^{2}(x)=x$ for all $x\in X(p)$ whenever $L$ is the Cayley table of a finite right Bol Loop.  Thus, the theorem holds.
\end{proof}       

Theorem \ref{thm:bolisomclasses} has a mild extension.

\begin{cor}       
\label{cor:MCbolisomclasses}
Let  $L$ be any Latin square which is main class equivalent to the Cayley table of a finite Bol loop of order at least 4.  Then the Terwilliger algebra of $L$ with respect to any base point is isomorphic (as a complex algebra) to ${\mathbb{M}}_{5}\oplus{\mathbb{M}}_{6}\oplus{\mathbb{M}}_{1}$  or ${\mathbb{M}}_{5}\oplus{\mathbb{M}}_{6}\oplus{\mathbb{M}}_{6}\oplus{\mathbb{M}}_{1}$. 
\end{cor}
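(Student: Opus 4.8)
The plan is to transport the conclusion of Theorem~\ref{thm:bolisomclasses} across the main class equivalence using Lemmas~\ref{lem:isotopyisom} and~\ref{lem:conjugacyisom}. Write $L'$ for the Cayley table of the Bol loop to which $L$ is main class equivalent. By definition there is a conjugacy $\sigma$ and an isotopy, so that $L$ is isotopic to the conjugate $L''=\sigma(L')$ of $L'$. Both operations preserve the array dimensions, so $L'$ has the same order as $L$, namely at least $4$; hence Theorem~\ref{thm:bolisomclasses} applies to $\mathcal{A}_{L'}$ at \emph{every} base point.

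First I would fix an arbitrary base point $p\in X(L)$ and follow it back to $X(L')$. Since $L$ and $L''$ are isotopic, Lemma~\ref{lem:isotopyisom} supplies an isotopy $\Theta\colon X(L)\to X(L'')$ with $\mathcal{A}_L(p)\cong\mathcal{A}_{L''}(\Theta(p))$. Since $L'$ and $L''$ are conjugate via $\sigma$, Lemma~\ref{lem:conjugacyisom} gives $\mathcal{A}_{L'}(s)\cong\mathcal{A}_{L''}(\sigma(s))$ for every $s\in X(L')$; taking $s=\sigma^{-1}(\Theta(p))$ yields $\mathcal{A}_{L''}(\Theta(p))\cong\mathcal{A}_{L'}(s)$. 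Composing the two isomorphisms gives $\mathcal{A}_L(p)\cong\mathcal{A}_{L'}(s)$ with $s=\sigma^{-1}(\Theta(p))\in X(L')$.

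The final step is to apply Theorem~\ref{thm:bolisomclasses} to the right-hand side: because $L'$ is the Cayley table of a Bol loop of order at least $4$, the algebra $\mathcal{A}_{L'}(s)$ is isomorphic to ${\mathbb{M}}_{5}\oplus{\mathbb{M}}_{6}\oplus{\mathbb{M}}_{1}$ or to ${\mathbb{M}}_{5}\oplus{\mathbb{M}}_{6}\oplus{\mathbb{M}}_{6}\oplus{\mathbb{M}}_{1}$. Therefore $\mathcal{A}_L(p)$ is isomorphic to one of these, and since $p$ was arbitrary the corollary follows.

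I expect this to be essentially a bookkeeping argument with no serious obstacle; the statement is already foreshadowed by the remark that main class equivalent Latin squares have isomorphic Terwilliger algebras at corresponding base points. The only points demanding care are orienting each equivalence correctly (the lemmas push base points forward, so the conjugacy contributes the inverse $\sigma^{-1}$) and observing that I need \emph{not} control which base point $s$ of $L'$ arises: Theorem~\ref{thm:bolisomclasses} holds uniformly over all base points of $L'$, so every $s$ lands in the allowed list regardless of $p$.
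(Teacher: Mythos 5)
Your proposal is correct and matches the paper's argument exactly: the paper's proof is the one-line ``Immediate from Lemmas \ref{lem:isotopyisom} and \ref{lem:conjugacyisom} and Theorem \ref{thm:bolisomclasses},'' and you have simply spelled out that transport of structure in full. Your care about orienting the conjugacy (introducing $\sigma^{-1}$) and noting that Theorem \ref{thm:bolisomclasses} holds uniformly over all base points is exactly the bookkeeping the paper leaves implicit.
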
       

\begin{proof}       
Immediate from Lemmas \ref{lem:isotopyisom} and \ref{lem:conjugacyisom} and  Theorem \ref{thm:bolisomclasses}.
\end{proof}       

Each summand ${\mathbb{M}}_{6}$ in Theorem \ref{thm:bolisomclasses} and Corollary \ref{cor:MCbolisomclasses} is associated with a distinct eigenvalue (1 and $-1$) of the permutation matrix of $\pi$, as in  Theorem \ref{thm:tmods}.

\begin{cor}       
Let  $L$ be any Latin square which is main class equivalent to the Cayley table of a finite Bol loop of order at least 4. 
The multiplicity of irreducible $\mathcal{A}_L(p)$-modules associated with $-1$ is the number of 2-cycles in $\pi$ and the multiplicity of irreducible $\mathcal{A}_L(p)$-modules associated with $1$ is the number of cycles in $\pi$ minus one.
\end{cor}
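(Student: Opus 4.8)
The plan is to obtain both multiplicities as a direct specialization of Theorem \ref{thm:tmods}, items (iii) and (iv), the only input needed being that $\pi = \pi_{L,p}$ has all cycles of length at most two, so that $1$ and $-1$ are its only eigenvalues as a permutation matrix. When $L$ is the Cayley table of the Bol loop itself this is immediate from Theorem \ref{thm:bolorder}, which gives $\pi^2 = \mathrm{id}$. For a general representative $L$ of the main class I would instead extract $\pi^2 = \mathrm{id}$ from the isomorphism type. First I would apply Corollary \ref{cor:MCbolisomclasses} to get $\mathcal{A}_L(p) \cong \mathbb{M}_5 \oplus \mathbb{M}_6^{\oplus N} \oplus \mathbb{M}_1$ with $N \le 2$. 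Then, reading Theorem \ref{thm:permtoisom} backwards, with $\pi_{L,p}$ written as a product of $k$ disjoint cycles and $U$ the union of the associated root-of-unity sets $u_i$, the bound $N \le 2$ forces $|U| \le 2$ when $k > 1$ and $|U| \le 3$ when $k = 1$. Since a cycle of length $\ell \ge 3$ contributes $\ell$ distinct $\ell$-th roots of unity to $U$, any such cycle gives $|U| \ge 3$; for $k > 1$ this already contradicts the bound, and for a single long cycle it does so as well once $n \ge 5$. Hence, away from the borderline order treated below, $\pi_{L,p}^2 = \mathrm{id}$.

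With $\pi^2 = \mathrm{id}$ in hand, each cycle $\mathcal{C}_i$ has length $1$ or $2$, so $u_i = \{1\}$ for a fixed point and $u_i = \{1,-1\}$ for a transposition, whence $U \subseteq \{1,-1\}$. I would then specialize the two relevant clauses of Theorem \ref{thm:tmods}. For clause (iii) with $\epsilon = -1$, the dimension-$6$ module attached to $-1$ has multiplicity $|\{\, i \mid -1 \in u_i \,\}|$, and since $-1 \in u_i$ holds precisely for the transpositions, this count is exactly the number of $2$-cycles of $\pi$. For clause (iv), the dimension-$6$ module attached to $\epsilon = 1$ has multiplicity $k-1$, the number of cycles of $\pi$ minus one; here $n \ge 4$ leaves $n-1 \ge 3$ points to be covered by cycles of length at most two, so $k \ge 2$ and clause (iv) genuinely contributes. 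Matching these two against the summands isolated in the remark just before the statement, each $\mathbb{M}_6$ attached to a distinct eigenvalue $1$ or $-1$, yields both assertions of the corollary.

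The main obstacle is the transfer to an arbitrary main-class representative: there $\pi_{L,p}$ is read off from $L$ itself and need not agree with the Bol loop's permutation, so $\pi^2 = \mathrm{id}$ is not handed to us by Theorem \ref{thm:bolorder} and must be recovered from the isomorphism type as above. The delicate point is the order $n = 4$, where a single $3$-cycle also produces $N = 2$ but attaches the two $\mathbb{M}_6$ summands to the primitive cube roots of unity rather than to $\pm 1$. Since the multiplicity $n^2 - 6n + 7$ of the one-dimensional module degenerates at $n = 4$, I would either restrict to $n \ge 5$, where the cycle-length argument is clean, or settle order $4$ by directly inspecting the (group, hence Bol) loops of that order and their main class. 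Once this degeneracy is cleared, the remainder is a routine specialization, with all the substance carried by Theorems \ref{thm:bolorder}, \ref{thm:tmods}, and \ref{thm:permtoisom}.
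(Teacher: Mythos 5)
Your core specialization is exactly the content behind the paper's one-line proof, which simply cites Theorems \ref{thm:tmods} and \ref{thm:permtoisom}: once $\pi_{L,p}^2=\mathrm{id}$ is known, every $u_i$ is $\{1\}$ or $\{1,-1\}$, so $U\subseteq\{1,-1\}$, clause (iii) with $\epsilon=-1$ counts precisely the transpositions, clause (iv) gives $k-1$ for the eigenvalue $1$, and your observation that $k\geq 2$ (since $n-1\geq 3$ points cannot be covered by a single cycle of length at most $2$) correctly ensures clause (iv) applies. Where you genuinely diverge from the paper is in how $\pi_{L,p}^2=\mathrm{id}$ reaches an arbitrary main class representative $L$. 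The paper implicitly transfers the cycle structure of $\pi$ through the isomorphisms of Lemmas \ref{lem:isotopyisom} and \ref{lem:conjugacyisom}; since these isomorphisms come from correspondences of base points (conjugation by a relabeling of $X(L)$), they preserve the standard module and hence the multiplicities and eigenvalue data, which pin down the cycle type of $\pi_{L,p}$ and carry Theorem \ref{thm:bolorder}'s involutivity across the main class. You instead recover involutivity from the abstract Wedderburn type $N\leq 2$ of Corollary \ref{cor:MCbolisomclasses}, which is self-contained but, as you correctly flag, genuinely weaker at $n=4$: a single $3$-cycle also produces $N=2$, with the two $\mathbb{M}_6$ summands attached to primitive cube roots of unity, so the abstract isomorphism class alone cannot settle that case. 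Your proposed patch by finite inspection of the order-$4$ main classes (those of $\mathbb{Z}_4$ and $\mathbb{Z}_2^2$, which are groups and hence Moufang, so Corollary \ref{cor:moufang} gives cycle types $1\,2$ and $1^3$ respectively) does close the gap, though restricting to $n\geq 5$ would not, since the corollary claims order at least $4$; the uniform route is the multiplicity-preservation argument above, which is presumably what the paper intends. In short, your argument is correct once the $n=4$ inspection is carried out, and it has the merit of surfacing a transfer subtlety that the paper's ``straightforward'' proof leaves unaddressed.
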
       

\begin{proof}       
Straightforward from Theorems \ref{thm:tmods} and  \ref{thm:permtoisom}.
\end{proof}        

\begin{cor}       
\label{cor:moufang}
Let $L$ be the Cayley table of a  finite Moufang loop $Q$. Fix $p=(r_{p},c_{p},r_{p}c_{p})\in X(L)$, and write $\pi=\pi_{L,p}$. Let $s$ denote the number elements of $Q$ which are their own inverse.
Then $\pi^2= \mathrm{id}$ and $\pi_{L,p}$ fixes exactly $s-1$ elements of $X(p)$.
\end{cor}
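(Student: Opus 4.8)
My plan is to apply Theorem \ref{thm:bolorder} directly, since a Moufang loop is in particular a right Bol loop, and then to translate the fixed-point condition from that theorem into the self-inverse condition counted by $s$. Because $Q$ is Moufang (hence right Bol), Theorem \ref{thm:bolorder} immediately gives $\pi^2=\mathrm{id}$, which settles the first claim with no further work. The substance of the corollary is the count of fixed points, so the bulk of the argument is the following chain of equivalences: for $c\in Q-\{c_p\}$, the element $c$ is fixed by $\pi$ if and only if $cc_p^{-1}=c_pc^{-1}$, again by Theorem \ref{thm:bolorder}. I would then show that in a Moufang loop this condition is equivalent to $c$ being its own inverse.

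The key step is converting $cc_p^{-1}=c_pc^{-1}$ into $c=c^{-1}$. The natural tool is the antiautomorphic inverse property \eqref{eq:moufantiautinv}, which holds in every Moufang loop. I would apply the inverse map to both sides of $cc_p^{-1}=c_pc^{-1}$; using \eqref{eq:moufantiautinv} the left side $(cc_p^{-1})^{-1}$ becomes $(c_p^{-1})^{-1}c^{-1}=c_pc^{-1}$ and the right side $(c_pc^{-1})^{-1}$ becomes $(c^{-1})^{-1}c_p^{-1}=cc_p^{-1}$, so the equation is symmetric under inversion but does not by itself force $c=c^{-1}$. A cleaner route is to rearrange $cc_p^{-1}=c_pc^{-1}$ using the right inverse property \eqref{eq:BolRIP} and cancellation \eqref{eq:uniqueleftdivisor}: right-multiplying by $c_p$ and then by appropriate factors, together with the antiautomorphic inverse property, should collapse the condition to $c=c^{-1}$. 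I expect the main obstacle to be carrying out this manipulation without inadvertently assuming associativity; each regrouping must be justified by a named loop axiom \eqref{eq:uniqueleftdivisor}, \eqref{eq:BolRIP}, or \eqref{eq:moufantiautinv}.

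Once the equivalence $\pi(c)=c \iff c=c^{-1}$ is established for $c\in Q-\{c_p\}$, the counting is immediate. The permutation $\pi$ acts on $I(L)-\{c_p\}=Q-\{c_p\}$, so the number of its fixed points equals the number of self-inverse elements of $Q$ other than $c_p$. Since $c_p$ itself, being an element of the loop $Q$, may or may not be self-inverse, I would note that exactly one self-inverse element of $Q$ is excluded from the domain of $\pi$; the identity $\iota$ is always self-inverse and always lies among the $s$ self-inverse elements, and removing $c_p$ from the domain removes precisely one self-inverse element from the count when $c_p$ is self-inverse. The remaining care is to confirm that the count yields exactly $s-1$ fixed points of $\pi$ on $X(p)$ regardless of the choice of $c_p$; this requires observing that $c_p$ is itself self-inverse in the relevant normalization, or more carefully, that the phrasing ``fixes exactly $s-1$ elements of $X(p)$'' reflects removing one self-inverse element (namely the one corresponding to $c_p$) from the total of $s$. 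I would finish by remarking that the fixed points of $\pi$ on $X(p)$ are in bijection with the fixed points of $\pi$ on $Q-\{c_p\}$, completing the count.
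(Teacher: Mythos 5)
There is a genuine gap: the central equivalence you set out to prove, $\pi(c)=c \iff c=c^{-1}$, is false, and no amount of careful regrouping will rescue it. Theorem \ref{thm:bolorder} says $c$ is fixed if and only if $cc_p^{-1}=c_pc^{-1}$, and by the antiautomorphic inverse property \eqref{eq:moufantiautinv} we have $c_pc^{-1}=(cc_p^{-1})^{-1}$; so the condition is that the \emph{product} $cc_p^{-1}$ is its own inverse, not that $c$ is. You actually computed exactly this when you applied the inverse map to both sides and found the equation ``symmetric under inversion'' --- that observation \emph{is} the paper's key step, but you dismissed it as a dead end and hoped a further manipulation would collapse the condition to $c=c^{-1}$. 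It cannot: take $Q=\mathbb{Z}/4\mathbb{Z}$ (written additively) and $c_p=1$. The fixed-point condition $c-1=1-c$ gives $c=3$ (after excluding $c=c_p$), and indeed $s-1=1$ since the self-inverse elements are $\{0,2\}$; but $3$ is not self-inverse, and your criterion would instead count the two self-inverse elements of $Q-\{c_p\}$, giving the wrong set and the wrong number.

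The correct count, which is the paper's, then falls out cleanly and resolves the hedging in your final paragraph: by the quasigroup property, $c\mapsto cc_p^{-1}$ is a bijection from $Q$ to $Q$ sending $c_p$ to $\iota$, so as $c$ runs over $Q-\{c_p\}$ the products $cc_p^{-1}$ run over $Q-\{\iota\}$ exactly once each. The number of fixed points is therefore the number of self-inverse elements of $Q-\{\iota\}$, which is $s-1$ because the identity is always self-inverse --- no case analysis on whether $c_p$ is self-inverse is needed, and the answer is independent of $c_p$, matching Corollary \ref{cor:rowindep}. (Your first claim, that $\pi^2=\mathrm{id}$ follows immediately from Theorem \ref{thm:bolorder} since a Moufang loop is right Bol, is correct and agrees with the paper.)
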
       

\begin{proof}       
By Theorem \ref{thm:bolorder},  $c$ is fixed by $\pi=\pi_{L,p}$ 
if and only if $cc_{p}^{-1}=c_{p}c^{-1}$. However, by  (\ref{eq:moufantiautinv}),
$c_{p}c^{-1}= (cc_{p}^{-1})^{-1}$.  Thus $c$ is fixed by $\pi=\pi_{L,p}$ if and only if $cc_{p}^{-1}$ is its own inverse in $Q$. The case $c=c_{p}$ (when $cc_{p}^{-1}=\iota$) is excluded from the count. All other elements of $Q$ arise from the product $cc_{p}^{-1}$ as $c$ runs over $Q-\{c_{p}\}$. The result follows from Theorem  \ref{thm:bolorder}.
\end{proof}      

Corollary \ref{cor:moufang}  was shown for finite groups  in \cite{CurtinDaqqa:SALS}. 
The collective cycle structures of a quasigroup cannot distinguish groups and Moufang loops.

The Latin square $L$ in Figure \ref{fig:nonexample}  satisfies $\pi_{L,p}^2=\mathrm{id}$ for all base points $p$, but it is not main class equivalent to a Bol loop.  The cycle structure of $\pi_{L,p}$ is $2^3$  when $p$ is one of the boxed points  and $1^4 2$ otherwise.   The boxed points form a transversal, and its image under isotopy and conjugacy remains one.
Thus by Corollary \ref{cor:rowindep},  it is not main class equivalent to a Bol loop.  (Or, a Bol loop of prime order must be a group, and the cycle structure in this example is not that of the integers mod 7 by Corollary \ref{cor:moufang}.)

\begin{figure}[h]       
\[
\begin{array}{ccccccc}
 1 & 2 & 3 & 4 & \fbox{5} & 6 & 7 \\
 2 & 1 & 4 & 3 & 7 & 5 & \fbox{6} \\
 \fbox{3} & 7 & 5 & 6 & 1 & 4 & 2 \\
 4 & 3 & 2 & 1 & 6 & \fbox{7} & 5 \\
 5 & 6 & \fbox{1} & 7 & 3 & 2 & 4 \\
 6 & 5 & 7 & \fbox{2} & 4 & 1 & 3 \\
 7 & \fbox{4} & 6 & 5 & 2 & 3 & 1
\end{array}
\]
\caption{A non-example}
\label{fig:nonexample}
\end{figure}       

\section{Toward a converse}\label{sec:BolfromT}

We discuss when the Terwilliger algebra of a quasigroup  is that of a Bol loop.
We first interpret some cycle lengths.    

\begin{lem}       
\label{lem:iiicyclemean}
Let $L$ be the Cayley table of a finite loop $Q$ with identity $\iota$.  Fix base point $p=(\iota,\iota,\iota)$, and let $\pi=\pi_{L,p}$.  Fix $c\in Q-\{\iota\}$.  Then  $c$ is in a cycle of length $k$ if and only if $k$ is the least positive integer such that $c^\lambda = (((c^\rho)^\rho ) \cdots )^\rho$ with $(k-1)$-many $\rho$.
\end{lem}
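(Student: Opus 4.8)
The plan is to specialize the general relations of Lemma~\ref{lem:looppidef} to the identity base point, recognize $\pi$ as the right-inverse map, and then translate the cycle-return condition $\pi^{k}(c)=c$ into the asserted inverse identity.

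First I would set $r_{p}=c_{p}=\iota$ in Lemma~\ref{lem:looppidef}. Equation~(\ref{eqn:xiyi-a}) becomes $r\iota=\iota c$, so $r=c$; and~(\ref{eqn:xiyi-b}) becomes $c\,\pi(c)=\iota$. By uniqueness of the right inverse in a loop, this says precisely that $\pi(c)=c^{\rho}$. Since $c\neq\iota$ forces $c^{\rho}\neq\iota$ (otherwise $c\iota=\iota$ gives $c=\iota$), the map $c\mapsto c^{\rho}$ restricts to a self-map of $Q-\{\iota\}$, consistent with $\pi$ being a permutation of that set, and every iterate stays in $Q-\{\iota\}$.

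Iterating, $\pi^{j}(c)$ is the result of applying the right-inverse operation $j$ times to $c$, that is $\pi^{j}(c)=(((c^{\rho})^{\rho})\cdots)^{\rho}$ with $j$ occurrences of $\rho$. The element $c$ lies in a cycle of length $k$ exactly when $k$ is the least positive integer with $\pi^{k}(c)=c$. I would rewrite this last equation as $\bigl(\pi^{k-1}(c)\bigr)^{\rho}=c$ and observe that, for any $b$, the relation $b^{\rho}=c$ is equivalent to $bc=\iota$, i.e.\ to $b=c^{\lambda}$. Hence $\pi^{k}(c)=c$ holds if and only if $\pi^{k-1}(c)=c^{\lambda}$; substituting the iterated-inverse expression for $\pi^{k-1}(c)$ yields exactly $c^{\lambda}=(((c^{\rho})^{\rho})\cdots)^{\rho}$ with $(k-1)$-many $\rho$. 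Taking the least such $k$ on each side gives the stated equivalence.

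The computation is short, so I expect no substantive difficulty; the only point requiring care is the bookkeeping of the two one-sided inverses, namely keeping straight that $\pi$ produces the \emph{right} inverse while the return condition is most naturally expressed through the \emph{left} inverse. Accordingly I anticipate that the main (minor) obstacle is stating the step $b^{\rho}=c \iff b=c^{\lambda}$ cleanly, rather than any genuine conceptual barrier.
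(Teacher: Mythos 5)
Your proposal is correct and follows essentially the same route as the paper's proof: specialize Lemma~\ref{lem:looppidef} to $r_p=c_p=\iota$ to identify $\pi$ with the right-inverse map, iterate, and convert the return condition $\pi^{k}(c)=c$ into $\pi^{k-1}(c)=c^{\lambda}$ via the equivalence $b^{\rho}=c \Leftrightarrow bc=\iota \Leftrightarrow b=c^{\lambda}$. Your added remarks (that $c^{\rho}\neq\iota$ so the iterates stay in $Q-\{\iota\}$, and that the equivalence holds for each $k$ so the least values coincide) are harmless elaborations of steps the paper leaves implicit.
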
       

\begin{proof}       
Set $r_{p}=c_{p}=\iota$ in (\ref{eqn:xiyi-a}) and (\ref{eqn:xiyi-b}): $r=c$ and $rc'=\iota$.  Now $\pi(c)= c'=c^{\rho}$.   By induction $\pi^k(c) = (((c^\rho)^\rho ) \cdots )^\rho$, with $k$-many $\rho$.  The right side equals $c$ if and only if $1 = (((c^\rho)^\rho ) \cdots )^\rho c$ with $(k-1)$-many $\rho$, i.e.~$(((c^\rho)^\rho ) \cdots )^\rho = c^\lambda$.
\end{proof}       

\begin{cor}       
\label{cor:2sided}
With reference to Lemma \ref{lem:iiicyclemean},   $\pi^2(c)=c$ if and only if $c$ has a two-sided inverse. When these equivalent conditions hold, $\pi(c)=c^{-1}$.
\end{cor}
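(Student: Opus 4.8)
The plan is to leverage Lemma~\ref{lem:iiicyclemean}, which already identifies the cycle structure of $\pi$ at the base point $p=(\iota,\iota,\iota)$ with the iterated right-inverse operation. First I would specialize that lemma to the case $k=2$. The lemma states that $c$ lies in a cycle of length $k$ precisely when $k$ is the least positive integer with $c^\lambda = \underbrace{(((c^\rho)^\rho)\cdots)^\rho}_{k-1}$. From the proof of the lemma we also have the cleaner iteration formula $\pi(c)=c^\rho$ and, by induction, $\pi^k(c)=\underbrace{(((c^\rho)^\rho)\cdots)^\rho}_{k}$. So the condition $\pi^2(c)=c$ unwinds directly to $(c^\rho)^\rho = c$, i.e.\ the right inverse of the right inverse of $c$ returns $c$.

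Next I would translate $(c^\rho)^\rho=c$ into the two-sided inverse condition. The key is to recall the defining relations of left and right inverses in a loop: $c\,c^\rho=\iota$ and $c^\lambda c=\iota$. Starting from $(c^\rho)^\rho=c$, I would apply the definition of right inverse to $c^\rho$, namely $c^\rho (c^\rho)^\rho=\iota$, and substitute $(c^\rho)^\rho=c$ to obtain $c^\rho c=\iota$. This exhibits $c^\rho$ as a left inverse of $c$; by uniqueness of the left inverse (stated in Section~\ref{sec:T-loop}) we conclude $c^\rho=c^\lambda$, which is exactly the statement that $c$ has a two-sided inverse, and moreover $\pi(c)=c^\rho=c^{-1}$. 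For the converse, if $c$ has a two-sided inverse then $c^\rho=c^\lambda=c^{-1}$, so $(c^\rho)^\rho=(c^{-1})^\rho$; since $c^{-1}c=\iota=cc^{-1}$, the element $c$ serves as the right inverse of $c^{-1}$, giving $(c^{-1})^\rho=c$ and hence $\pi^2(c)=c$.

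The final sentence, that $\pi(c)=c^{-1}$ when these conditions hold, then falls out immediately from $\pi(c)=c^\rho=c^{-1}$, which was already recorded in the previous paragraph. The main obstacle, such as it is, is purely bookkeeping: one must be careful not to assume associativity anywhere and to invoke only the uniqueness of one-sided inverses and the defining relations, since $Q$ is merely a loop here and the Bol axiom is not available. Everything reduces to a short chain of substitutions using $c\,c^\rho=\iota$, $c^\lambda c=\iota$, and the uniqueness clauses; no induction beyond the $k=2$ case of Lemma~\ref{lem:iiicyclemean} is needed.
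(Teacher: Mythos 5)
Your proof is correct and takes essentially the same route as the paper: both reduce $\pi^2(c)=c$ via Lemma~\ref{lem:iiicyclemean} (and the formula $\pi(c)=c^\rho$ from its proof) to the condition $(c^\rho)^\rho=c$, then identify this with $c^\rho=c^\lambda=c^{-1}$ using the defining relations and uniqueness of one-sided inverses. Your forward direction merely spells out the substitution $c^\rho c=\iota$ that the paper compresses into a citation of the lemma.
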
       

\begin{proof}      
If $\pi^2(c)=c$, then $c^{\rho}=c^{\lambda}=c^{-1}$ by Lemma \ref{lem:iiicyclemean}.  Conversely, if $c$ has a two-sided inverse, then $c^{\rho}=c^{-1}$, and  $(c^{\rho})^\rho = c$, so $\pi^2(c)=c$.
\end{proof}       

\begin{cor}       
With reference to Lemma \ref{lem:iiicyclemean},
$\pi^2=\mathrm{id}$ if and only if every element of $Q$ has a two-sided inverse.
\end{cor}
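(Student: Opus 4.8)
The plan is to leverage Corollary \ref{cor:2sided} directly, quantifying over all elements of $Q$. First I would observe that $\pi=\pi_{L,p}$ is a permutation of $I(L)-\{c_p\} = Q-\{\iota\}$, so the statement $\pi^2=\mathrm{id}$ is precisely the statement that $\pi^2(c)=c$ for every $c\in Q-\{\iota\}$. By Corollary \ref{cor:2sided}, for each individual $c\in Q-\{\iota\}$ we have $\pi^2(c)=c$ if and only if $c$ has a two-sided inverse. Combining these two observations gives that $\pi^2=\mathrm{id}$ if and only if every $c\in Q-\{\iota\}$ has a two-sided inverse.

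The only remaining point is to reconcile the quantifier ``every element of $Q$'' in the statement with the quantifier ``every element of $Q-\{\iota\}$'' that comes naturally from the domain of $\pi$. Here I would note that the identity $\iota$ trivially has a two-sided inverse (namely itself, since $\iota\iota=\iota$), so adjoining or removing $\iota$ from the condition ``every element has a two-sided inverse'' changes nothing. Hence the two phrasings agree, and the equivalence is exactly as stated.

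I expect no genuine obstacle here: this corollary is a routine universal closure of Corollary \ref{cor:2sided}, and the entire content is the elementwise equivalence already established. The only subtlety worth flagging explicitly in the write-up is the handling of the base-point column $\iota$, which is excluded from the domain of $\pi$ but must be accounted for when stating the condition over all of $Q$; since $\iota$ is self-inverse this is vacuous. The proof is therefore a single sentence invoking Corollary \ref{cor:2sided} applied to each $c\in Q-\{\iota\}$, together with the remark that $\iota$ is its own two-sided inverse.
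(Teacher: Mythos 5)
Your proposal is correct and matches the paper's argument exactly: the paper's proof is simply ``Clear from Corollary \ref{cor:2sided},'' i.e.\ the universal closure you describe. Your explicit handling of the identity $\iota$ (excluded from the domain of $\pi$ but trivially self-inverse) is the only detail the paper leaves implicit, and it is handled correctly.
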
       

\begin{proof}       
Clear from Corollary \ref{cor:2sided}.
\end{proof}       

Terwilliger algebras of quasigroups are only determined up to conjugacy of Cayley tables (Corollary \ref{cor:MCbolisomclasses}), so we use the right inverse property to distinguish potential right Bol loops.

\begin{thm}       
\label{thm:converse}
Let $L$ be the Cayley table of a finite loop $Q$ that has the right inverse property (\ref{eq:BolRIP}).   Suppose that for all $p=(r_p, c_p, r_pc_p)\in X(L)$ and all $c\in Q-\{c_p\}$, both  $\pi_{L, p}^2=\mathrm{id}$  and $\pi(c)=(c_p c^{-1})c_p$.  Then $Q$ is a right Bol loop.
\end{thm}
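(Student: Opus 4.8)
The goal is to derive the right Bol axiom (\ref{eq:Rightbolaxiom}) from the two hypotheses, essentially by running the computation of Theorem~\ref{thm:bolorder} in reverse. First I would record that every element of $Q$ has a two-sided inverse: applying the hypothesis $\pi_{L,p}^2=\mathrm{id}$ at the base point $p=(\iota,\iota,\iota)$ and invoking Corollary~\ref{cor:2sided} together with the corollary immediately following it shows $c^{\lambda}=c^{\rho}=c^{-1}$ for every $c\in Q$, so the inverse map is a well-behaved involution. Next I would fix an arbitrary base point $p=(r_{p},c_{p},r_{p}c_{p})$ and an arbitrary column $c\in Q-\{c_{p}\}$. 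Lemma~\ref{lem:looppidef} supplies a row $r$ with $rc_{p}=r_{p}c$ (\ref{eqn:xiyi-a}) and $r\pi(c)=r_{p}c_{p}$ (\ref{eqn:xiyi-b}); solving the second relation for $r$ by means of the right inverse property (\ref{eq:BolRIP}) gives $r=(r_{p}c_{p})\pi(c)^{-1}$, and substituting this into the first yields $((r_{p}c_{p})\pi(c)^{-1})c_{p}=r_{p}c$, exactly the relation obtained in the forward proof before the Bol axiom was applied.

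The second step converts the hypothesis $\pi(c)=(c_{p}c^{-1})c_{p}$ into a usable substitution for $c$. Applying this hypothesis to $\pi(c)$ in place of $c$ (legitimate since $\pi$ permutes $Q-\{c_{p}\}$) and using $\pi^{2}=\mathrm{id}$, that is $\pi(\pi(c))=c$, gives $c=(c_{p}\pi(c)^{-1})c_{p}$. Feeding this expression for $c$ into the right-hand side $r_{p}c$ of the identity from the first step and abbreviating $u=\pi(c)^{-1}$ produces $((r_{p}c_{p})u)c_{p}=r_{p}((c_{p}u)c_{p})$. Writing $a:=c_{p}$ and treating the outer factor $r_{p}$ as a free variable, this is precisely the right Bol axiom (\ref{eq:Rightbolaxiom}) with middle variable $u$.

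It remains to see which triples $(r_{p},a,u)$ are covered. As $c$ ranges over $Q-\{c_{p}\}$, the permutation $\pi$ sends it across the same set, so $u=\pi(c)^{-1}$ ranges over $Q-\{c_{p}^{-1}\}$, that is, over every element except $a^{-1}$. Since $r_{p}$ and $c_{p}$ are arbitrary, this establishes (\ref{eq:Rightbolaxiom}) for all $c$ (in the role of $r_{p}$), all $a$, and all $b\neq a^{-1}$. The single omitted value $b=a^{-1}$ is the one real obstacle: it is genuinely unreachable by this parametrization, because $\pi(c)\neq c_{p}$ forces $u\neq c_{p}^{-1}$, so it must be checked directly. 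For $b=a^{-1}$ the right inverse property (\ref{eq:BolRIP}) together with $(a^{-1})^{-1}=a$ gives $(ca^{-1})a=(ca^{-1})(a^{-1})^{-1}=c$, so the left side $((ca^{-1})a)a$ collapses to $ca$; and $a^{-1}a=\iota$ makes the right side $c((a^{-1}a)a)=c(\iota a)=ca$ as well, so the axiom holds in this case too. Thus (\ref{eq:Rightbolaxiom}) holds for all $a,b,c\in Q$ and $Q$ is a right Bol loop. I expect the only delicate point to be this boundary value $b=a^{-1}$; the remainder is a faithful reversal of the chain in Theorem~\ref{thm:bolorder}, with the right inverse property used (as there) to solve (\ref{eqn:xiyi-b}) for $r$.
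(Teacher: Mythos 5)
Your proof is correct and follows essentially the same route as the paper: solve (\ref{eqn:xiyi-b}) for the row via the right inverse property, substitute into (\ref{eqn:xiyi-a}), use the hypothesized formula for $\pi$ together with $\pi^2=\mathrm{id}$ to obtain the right Bol identity with middle variable ranging over $Q-\{c_p^{-1}\}$, then handle the leftover value separately --- the paper disposes of that boundary case with the one-line remark that the identity also holds for $c=c_p$. One small transcription slip in your boundary check: the instance $b=a^{-1}$ of (\ref{eq:Rightbolaxiom}) reads $((ca)a^{-1})a=c((aa^{-1})a)$, not $((ca^{-1})a)a=c((a^{-1}a)a)$; it holds just as immediately, since $(ca)a^{-1}=c$ by (\ref{eq:BolRIP}) and $aa^{-1}=\iota$, so both sides equal $ca$.
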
       

\begin{proof}       
Write $c_i=\pi^{i-1}(c)$ and let $r_i$ be such that $r_i c_{p} = r_{p} c_i$.
Now $c_3=c$ since $\pi^2=\mathrm{id}$, so  (\ref{eqn:xiyi-a}) and (\ref{eqn:xiyi-b}) give
\begin{eqnarray}
r_2 c_{p} &=& r_{p} c_2,\label{eq:convrel3}\\
r_2c &=& r_{p}c_{p}.\label{eq:convrel4}
\end{eqnarray}
 Now (\ref{eq:BolRIP}) and (\ref{eq:convrel4}) give
 $r_2=(r_pc_p)c^{-1}$.  Multiplying on the right by $c_p$ gives
 $r_2c_{p}= ((r_pc_p)c^{-1})c_p$. By (\ref{eq:convrel3}), 
 $((r_pc_p)c^{-1})c_p = r_p c_2$.  By assumption $c_2=  (c_p c^{-1})c_p$, so $((r_pc_p)c^{-1})c_p =r_p((c_p c^{-1})c_p)$.  This equation also holds for  $c=c_p$.
 Thus the right Bol axiom holds for all   $r_p$, $c_p$, $c\in Q$.
 \end{proof}        

We expect relatively few Latin squares other than those main class equivalent to the Cayley table of a Bol loop to satisfy $\pi^2=\mathrm{id}$ for all base points (c.f.~Figure \ref{fig:nonexample}).  It is too much to hope that the conditions $\pi^2=\mathrm{id}$ for all base points, the right inverse property, and constant cycle structure  in each column together imply that a loop is right Bol.    

Without the value of $\pi(c)$ in Theorem  \ref{thm:converse}, we only  have the following.

\begin{cor}       
Let $L$ be the Cayley table of a finite loop $Q$ which has the right inverse property (\ref{eq:BolRIP}).  Fix  $p=(r_p, c_p, r_pc_p)\in X(L)$ and $c\in Q-\{c_p\}$.  Let $\pi=\pi_{L,p}$.   Then   $\pi^2(c)=c$ if and only if 
           $((r_p c)c_p^{-1}) \pi(c) = ((r_p \pi(c))c_p^{-1}) c$.
\end{cor}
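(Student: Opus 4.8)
The plan is to unwind the definition of $\pi_{L,p}$ via Lemma~\ref{lem:looppidef} and translate the abstract condition $\pi^2(c)=c$ into an explicit algebraic identity, exploiting the right inverse property to solve for the auxiliary row-elements. First I would set $c_1=c$, $c_2=\pi(c)$, $c_3=\pi^2(c)$, and pick $r_1$, $r_2$ with $r_i c_p = r_p c_i$, exactly as in the proof of Theorem~\ref{thm:bolorder}. Applying (\ref{eqn:xiyi-a}) and (\ref{eqn:xiyi-b}) at each step gives the relations $r_1 c_p = r_p c$, $r_1 c_2 = r_p c_p$, and $r_2 c_p = r_p c_3$, $r_2 c_3 = r_p c_p$. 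Using (\ref{eq:BolRIP}) to solve the product relations, I would obtain $r_1 = (r_p c_p)c_2^{-1}$ and $r_2 = (r_p c_p)c_3^{-1}$, and separately $r_1 = (r_p c)c_p^{-1}$ and $r_2 = (r_p c_3)c_p^{-1}$ from the first relations (again via the right inverse property).

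Next I would characterize $\pi^2(c)=c$, i.e. $c_3 = c_1 = c$. The key observation is that the map sending $c$ to its preimage/image under $\pi$ is governed entirely by the $r_i$: once we know $r_2 = (r_p c)c_p^{-1}$ holds precisely when $c_3 = c$, the condition $\pi^2(c)=c$ becomes a statement purely about the $r_i$ that we can rewrite in the displayed form. Concretely, $c_3 = c$ forces $r_2 = (r_p c)c_p^{-1}$ (from $r_2 c_p = r_p c_3 = r_p c$ and (\ref{eq:BolRIP})), while in general $r_2 = (r_p c_p)c^{-1} \cdot$(correction)—so I would compute $r_1 c_2$ and $r_2 c_1$ both and compare. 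Multiplying the expression $r_1 = (r_p c)c_p^{-1}$ by $c_2=\pi(c)$ gives the left side $((r_p c)c_p^{-1})\pi(c)$, and multiplying $r_2 = (r_p \pi(c))c_p^{-1}$ (which comes from $r_2 c_p = r_p c_2$, valid when $c_3=c$ so that the index-shifted version of (\ref{eqn:xiyi-a}) reads $r_2 c_p = r_p c_2$) by $c_1 = c$ gives the right side $((r_p \pi(c))c_p^{-1})c$.

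The crux is the bookkeeping of subscripts: because $\pi^2(c)=c$ is exactly the hypothesis that lets us identify $c_3$ with $c$, I would argue that the symmetric pair of relations $r_1 c_2 = r_p c_p = r_2 c_3$ collapses to $r_1 c_2 = r_2 c$ precisely when $c_3 = c$, and that $r_1 c_2 = ((r_p c)c_p^{-1})\pi(c)$ and $r_2 c = ((r_p \pi(c))c_p^{-1})c$ after substituting the solved forms of $r_1$ and $r_2$. The equivalence then reads: $\pi^2(c)=c$ iff these two products agree. For the converse direction I would run the same substitutions backward, using cancellation (\ref{eq:uniqueleftdivisor}) to deduce $c_3 = c$ from the displayed equality.

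The main obstacle I anticipate is the \emph{direction management} of $\pi$: the definition of $\pi$ produces $c_2$ from $c_1$ by a specific three-step construction, and the natural relation one writes for the \emph{second} application is the index-shifted version, which only aligns cleanly with the first when $\pi^2(c)=c$. Getting the two applications of (\ref{eqn:xiyi-a})--(\ref{eqn:xiyi-b}) to produce left- and right-hand sides that match the asserted display—rather than a transposed or inverse-swapped variant—requires care, since only the right inverse property (not a two-sided cancellation on the right) is available, and one must be sure never to silently assume a left inverse property. The equivalence is genuinely an "iff," so both substitution directions must be reversible purely by cancellation, which is exactly where I would concentrate the verification.
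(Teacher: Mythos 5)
Your overall route is the paper's: introduce $c_i=\pi^{i-1}(c)$ and rows $r_i$ with $r_ic_p=r_pc_i$, use (\ref{eqn:xiyi-b}) twice to obtain $r_1c_2=r_pc_p=r_2c_3$, deduce via cancellation (\ref{eq:uniqueleftdivisor}) that $c_3=c$ if and only if $r_1c_2=r_2c$, and then rewrite $r_1=(r_pc)c_p^{-1}$ and $r_2=(r_p\pi(c))c_p^{-1}$ using (\ref{eq:BolRIP}) and (\ref{eqn:xiyi-a}) to get the displayed identity. Your closing paragraph states exactly this chain, which is the paper's proof.

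However, your middle bookkeeping contains a genuine error that, taken literally, breaks the ``if'' direction. You assert $r_2c_p=r_pc_3$ as an unconditional relation, and then say $r_2c_p=r_pc_2$ is ``valid when $c_3=c$.'' This is backwards. Lemma \ref{lem:looppidef} applied to the column $c_2$ produces a row $r_2$ satisfying \emph{both} $r_2c_p=r_pc_2$ (this is (\ref{eqn:xiyi-a})) and $r_2c_3=r_pc_p$ (this is (\ref{eqn:xiyi-b}), with $c_3=\pi(c_2)$ by definition); both hold with no hypothesis whatsoever on $c_3$, and the first is precisely your own setup $r_ic_p=r_pc_i$, which your displayed relation contradicts. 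The relation $r_2c_p=r_pc_3$ is false in general (combined with the correct one it would force $c_2=c_3$ by cancellation). The conditionality matters for the logic: to prove that the displayed equality implies $\pi^2(c)=c$, you must substitute $r_2=(r_p\pi(c))c_p^{-1}$ \emph{before} knowing $c_3=c$; if that identification were only available under the hypothesis $c_3=c$, the backward direction would be circular. Since the identification is in fact unconditional, the repair is immediate, and the corrected argument coincides with the paper's: $c_3=c$ iff $r_2c_3=r_2c$ iff $r_pc_p=r_2c$ iff $r_1c_2=r_2c$, followed by the two substitutions. Finally, your worry about silently invoking a left inverse property is unfounded here: $r_1=(r_1c_p)c_p^{-1}=(r_pc)c_p^{-1}$ and $r_2=(r_2c_p)c_p^{-1}=(r_p\pi(c))c_p^{-1}$ use only (\ref{eq:BolRIP}) applied to the products $r_ic_p$.
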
        

\begin{proof}       
Write $c_i=\pi^{i-1}(c)$. 
Then $\pi^2(c)=c$ if and only if $c_3=c$ if and only if $r_2c_3 = r_2c$ if and only $r_pc_p = r_2c$ (by  (\ref{eqn:xiyi-b})) if and only if $r_1c_2 = r_2c$  (by  (\ref{eqn:xiyi-b})).  By the right inverse property,  
$r_1c_2=((r_1c_p)c_p^{-1})c_2$ and 
$r_2c = ((r_2c_p)c_p^{-1})c_1$.  But $r_1c_p=r_pc$ and $r_2c_p = r_pc_2$ by (\ref{eqn:xiyi-a}).  The result follows.
\end{proof}        


\end{document}